\newtheorem{theorem}{Theorem}[section]
\newtheorem{definition}[theorem]{Definition}
\newtheorem{lemma}[theorem]{Lemma}
\newtheorem{proposition}[theorem]{Proposition}
\newtheorem{corollary}[theorem]{Corollary}
\theoremstyle{remark}
\numberwithin{equation}{section}
\numberwithin{figure}{section}
\newcommand{\R}{\mathbb{R}}
\newcommand{\N}{\mathbb{N}}
\begin{document}
\author{Tianyu Ma}
	\address{
	HSE University, Moscow, Russian Federation		
}
\email{tma@hse.ru}

	\author{Eugene Stepanov}
	\address{St.Petersburg Branch of the Steklov Mathematical Institute of the Russian Academy of Sciences,
		St.Petersburg, 
		Russian Federation
		\and
		Dipartimento di Matematica, Universit\`a di Pisa,
		Largo Bruno Pontecorvo 5, 56127 Pisa, Italy
		\and
		HSE University, Moscow, Russian Federation		
	}
	\email{stepanov.eugene@gmail.com}
	\date{\today}
	
	\title[eigenvalues and eigenfunctions of MDS defining operators on symmetric spaces]{On eigenvalues and eigenfunctions of the operators defining multidimensional scaling on some symmetric spaces}
	
	\begin{abstract}
	We study asymptotics of the eigenvalues and eigenfunctions of the operators used for constructing multidimensional scaling (MDS) on compact connected Riemannian manifolds, in particular on closed connected symmetric spaces. They are the limits of eigenvalues and eigenvectors of squared distance matrices of an increasing sequence of finite subsets covering the space densely in the limit. We show that for products of spheres and real projective spaces, the numbers of positive and negative eigenvalues of these operators are both infinite. We also find a class of spaces (namely $\mathbb{RP}^n$ with odd $n>1$) whose MDS defining operators are not trace class, and original distances cannot be reconstructed from the eigenvalues and eigenfunctions of these operators. 
	\end{abstract}
	
	\maketitle
	
	\section{Introduction}
	A problem frequently encountered in modern data science is that of reconstructing a metric space $(X, d)$ and the Borel measure
    $\mu$ on it just from the information on the distances between points of a sufficiently large finite subset $\Sigma_k:=\{x_1^k, \dots, x_k^k\} \subset X$. Here we require the subset $\Sigma_k$ to cover $X$ almost densely and with a density approximately $\mu$. Of course, unless $X$ is finite itself, no finite set of points will be sufficient to reconstruct the triple $(X,d,\mu)$ and one can only hope to do this in the limit as $k\to\infty$. To be more precise, we suppose to know distances between points of each set $\Sigma_k$ of some chosen sequence of finite subsets of $X$, and would like to recover from it the information on  $(X,d,\mu)$. This is known as the \textit{learning} problem (or \textit{manifold learning}, when $X$ is a priori supposed to be some smooth, say, Riemannian manifold, and $d$ to be its geodesic distance). 
    
    One of the basic algorithms aimed to solve the learning problem and widely used in applications is \textit{multidimensional scaling} (MDS)~\cite{wang2012geometric}. Although the latter has been originally proposed only for intrinsically Euclidean data
    (i.e., when $X$ is a subset of a Euclidean space $\R^n$ and the distance $d$ is Euclidean), it has been extended to generic metric spaces. Moreover, in applicative ``folklore'', it is often used not only when the distance $d$ is non-Euclidean, but also when $d$ is merely some symmetric function not necessarily satisfying the triangle inequality (the so-called \textit{dissimilarity} function). Whether this application of MDS is justified, i.e., what will be reconstructed by MDS when $d$ is a non-Euclidean distance has been recently posed and solved in~\cite{adams-blumstein-kassab2020MDS} with quite an astonishing answer. Namely, take $X := \mathbb{S}^1$ as a unit circle endowed with its geodesic distance, and let the points of $\Sigma_k\subset X$ to be uniformly spaced so that in the limit as $k\to \infty$ they cover $X$ uniformly. Then according to \cite{adams-blumstein-kassab2020MDS}, MDS yields in the limit as $k \to \infty$ a closed curve in an infinite-dimensional space, which is far from being a circle. An easy calculation shows that it is a fractal object, namely, a \emph{snowflake} embedding \cite{tyson2005characterizations} of a circle in an infinite-dimensional Hilbert space~\cite{PuchSpokSteTrev-manif20}. It becomes an isometric embedding if $\mathbb{S}^1$ is endowed with the geodesic distance 
	raised to some power $\alpha =1/2$. Although this may be unexpected in view of various commonly used applications of MDS, an explanation of this fact may be also traced back to the classical work \cite{von1941fourier} by Neumann and Schoenberg. In their paper, all the invariant metrics on the circle that embed isometrically into a Hilbert space are classified, including of course the $1/2$-snowflake re-obtained via MDS, which is actually credited to the earlier work~\cite{wilson1935certain}, see also the discussion in~\cite[section~7.3]{kassab2019MDS}.
	
	\subsection{Asymptotics of MDS embeddings}\label{sec:MDS_finite1}
	In the study of asymptotical behaviour of the spectra of matrices of squared distances between points of finite samples $\Sigma_k\subset X$, as well
	as of the embedding maps $\mathcal{M}_k\colon \Sigma_k\to\R^k$ produced by MDS, 	the linear operator $T$ over the space $L^2(X,\mu)$ defined by the following formulae plays an important role.
	\begin{equation}\label{eq_KTdef1}
		\begin{aligned}
			K(x,y)&:=- \frac{1}{2} d^2(x, y),\\
		(\mathcal{K} u) (x) &:= \int_X K(x, y) u(y) \,d \mu(y),\\ 
		T & := P \mathcal{K}  P,
	\end{aligned}
	\end{equation}
	where 
	$P$ is the 
	projector operator 
	to the orthogonal complement of constant functions in $L^2(X,\mu)$.  
	Both $\mathcal{K}$ and $T$ are well-defined  under just a mild assumption that $\mu$ have a finite $4$-th order moment, i.e.\
\begin{equation}\label{eq_d4int1}
		\int_{X} d^4(x_0, y) \,d \mu(y) < \infty
\end{equation}
	for some $x_0\in X$ (which holds for instance when $\mu$ is finite and $X$ is bounded). Moreover, in this case, they are self-adjoint Hilbert--Schmidt (and hence compact)  operators.  What is more important is the following: Under the same assumption~\eqref{eq_d4int1},  
suppose the empirical measures $\mu_k$ of finite samples
	$\Sigma_k:=\{x_1^k, \dots, x_k^k\}$ defined by
	\[
	\mu_k:= \frac 1 k \sum_{i=1}^k \delta_{x_i^k},
	\]	
	$\delta_y$ standing for the Dirac mass concentrated in $y\in X$,
	converge to $\mu$ as $k\to\infty$ in the Kantorovich\footnote{Usually, though historically incorrectly, the distances $W_p$ among probability measures are called Wassertsein distances.} $4$-distance $W_4$, i.e.
	$\lim_k W_4(\mu_k, \mu) =0$.  Then according to ~\cite[theorem~5.8]{Stepanov2022},  
	the maps $\mathcal{M}_k$ viewed as functions from $X$ to $\R^\infty$ (with $\R^k$ canonically identified with the subspace of $\R^\infty$ having all zero coordinates except the first $k$ coordinates) converge to some map $\mathcal{M}\colon X\to \R^\infty$, called further \textit{infinite MDS map}, in measure $\mu$ with respect to the product topology on $\R^\infty$ (see also \cite{lim2022classical}). 
The latter is given by the formula
	\begin{equation}\label{eq:MDS1}
		\mathcal{M}(x) := \left(\sqrt{\lambda_1^+} \phi_1^+(x), \sqrt{\lambda_2^+} \phi_2^+(x). \ldots, \sqrt{\lambda_j^+} \phi_j^+(x),\ldots \right), 
	\end{equation}
	where $\lambda_1^+ \ge \lambda_2^+ \ge \dots > 0$ are positive eigenvalues of $T$ (counting multiplicity), and $\{\phi_j^+\}_j \in L^2(X,\mu)$ is an orthonormal system in $L^2(X,\mu)$ made of the respective eigenfunctions, i.e.\ $T\phi_j^+=\lambda_j^+\phi_j^+$. Note that the definition of $M$ depends on the choice of $\phi_j^+$. Here we silently assume that if the set of positive eigenvalues of $T$ contains $N< \infty$ elements, then $(M(x))_j := 0$ for $j >N$. 
	
	By calculating explicitly the eigenvalues and eigenfunctions of $T$ and using~\eqref{eq:MDS1}, one shows in~\cite{Stepanov2022} that $M$ gives a snowflake (Assouad-type) embedding of any $m$-dimensional sphere $\mathbb{S}^m$ or any   
	 $m$-dimensional flat torus $(\mathbb{S}^1)^m$ into the Hilbert space $\ell^2$ of square summable sequences (in the calculations one assumes
	 $\mu$ to be the respective volume measure in all these cases).
	
	Another important observation is the following: With the stronger assumption that $T$ is a trace-class operator, i.e.,
	\begin{equation*}\label{eq_Tnucl11}
		\sum_i |\lambda_i| < +\infty,
	\end{equation*} 
	where $\{\lambda_i\}$ stands for the sequence of \textit{all} eigenvalues of $T$, let the metric measure space $(X,d,\mu)$ be, say, infinitesimally doubling~see~\cite[theorem~3.4.3]{heinonen2015sobolev} (which includes any smooth Riemannian manifold equipped with geodesic distance and volume measure). Then by~\cite[theorem~5.8]{Stepanov2022} (see also \cite{lim2022classical}), in a sense,
	distances between almost every pair of points can be recovered from the spectrum of $T$ and the set of the respective eigenfunctions. Namely, in this case, we have
	\begin{equation}\label{eq_aeinj11}
		\sum_{i = 1}^\infty \lambda_i \left(\phi_i(x) - \phi_i(y)\right)^2 = 
		d^2(x, y) \quad 	\text{for $\mu \otimes \mu$-a.e.\ $(x,y)$},
\end{equation}
where $\{\phi_i\}$ stands for an orthonormal basis in $L^2(X,\mu)$ made of eigenfunctions of $T$ with $T\phi_i=\lambda_i \phi_i$. The importance of the trace class condition on the operators $T$ and $\mathcal{K}$ for the asymptotics of the spectra of distance matrices has been also studied recently in~\cite{vershik2023limit}.
	
	\subsection{Questions and results}\label{sec:MDS_questres1}
	The above-cited results raise a series of curious questions. Namely, one asks whether there are natural examples of spaces $(X,d,\mu)$ 
	such that
	\begin{itemize}
		\item[(Q1)] 
		no infinite MDS map  (i.e., independently on the choice of eigenfunctions of $T$) gives a topological embedding of $X$ into $\ell^2$,
		\item[(Q2)] the operator $T$ is not trace-class and/or the distance reconstruction formula~\eqref{eq_aeinj11} is not valid.
	\end{itemize}
	We find both examples among just compact Riemannian manifolds with volume measure, namely, both $(Q1)$ and $(Q2)$ are satisfied by projective spaces of sufficiently high dimension.
	We do so by studying the operator $T$, its eigenvalues and eigenfunctions for symmetric compact Riemannian manifolds with volume measure. Note that in general, there seems to be no easy way to find either the spectrum or eigenfunctions of $T$. However, in this particular case
	the situation greatly simplifies since we are able to show that $T$ commutes with the Laplace-Beltrami operator, which allows us to search for its eigenfunctions among the eigenfunctions of the latter. 
	We are able then to show that if $X$ is a finite product of spheres of any dimensions, the infinite MDS map gives a snowflake embedding of
	$X$ into $\ell^2$ thus generalizing the results of~\cite{Stepanov2022}, while if $X$ is a projective space with sufficiently high dimension, then $M$ does not send $X$ to $\ell^2$ at all, and in particular the distance reconstruction formula~\eqref{eq_aeinj11}
	is not valid. Curiously however, it happens that in all these cases. Moreover, if $X$ is a finite product of spheres and projective spaces, then the spectrum of $T$ contains infinitely many positive and negative eigenvalues. This contrasts with the case that $(X,d)$ is isometrically embeddable in a Hilbert space, in which all the eigenvalues of $T$ are positive.

\section{Notation and preliminaries}

For vectors $x$ and $y$ in the Euclidean space $\R^n$, we denote by $x\cdot y$ their Euclidean scalar product. The Euclidean norm is denoted by  $|\cdot|$
Let $\ell^2$ be the usual Banach space of square summable sequences equipped with its usual norm  $\|\cdot\|_2$. The space $\R^\infty$ stands for the linear
space of all real-valued sequences (sometimes denoted by $\R^\N$ in the literature), equipped with its product topology. The norm $\Vert \cdot\Vert_2$ can be extended to a ``metric" on $\mathbb{R}^{\infty}$ taking values in $[0,+\infty]$.

If $X$ is a smooth Riemannian manifold, we denote by $C^\infty(X)$ the set of infinitely smooth functions over $X$. 

Throughout the paper, we sometimes use the big Theta notation by D.~Knuth.

For a metric measure space $(X,d,\mu)$, we will assume $\mu$ to be a Borel probability measure. By $\langle x.y\rangle$ we denote then the standard scalar product in the Hilbert space $L^2(X,\mu)$. For a $u\in L^2(X,\mu)$ we let $u^\perp$ stand for its orthogonal complement in $L^2(X,\mu)$. The spectrum of a linear operator $T$ counting multiplicity is denoted as $Spec(T)$, and its signature $sgn(T)$ consists of three values ordered in the numbers of zero, positive and negative eigenvalues. 

Recall for a metric measure space $(X,d,\mu)$, the MDS map $\mathcal{M}: X\rightarrow \mathbb{R}^{\infty}$ defined as in \eqref{eq:MDS1} are obtained from the positive eigenvalues and their corresponding eigenfunction the operator $T$. Such maps are not unique, a different choice of the orthonormal set $\{ \phi_j^+\}$ yields a different map. However, they  all have the common property given by the following 
statement.

\begin{lemma}
\label{lemma:mds dist inv}
If the operator $T$ is Hilbert-Schmidt, then for any MDS maps $\mathcal{M}^1,\mathcal{M}^2$, we have
\begin{align*}
	\Vert \mathcal{M}^1(x)-\mathcal{M}^1(y)\Vert_2=\Vert \mathcal{M}^2(x)-\mathcal{M}^2(y)\Vert_2,\quad \mbox{for all } x,y\in X.
\end{align*}
Moreover, in this case the right-hand side of~\eqref{eq_aeinj11} is independent of the choice of eigenfunctions $\phi_j$ of $\mathcal{K}$. 
\end{lemma}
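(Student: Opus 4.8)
The plan is to observe that the indeterminacy of an MDS map comes entirely from the freedom in choosing an orthonormal basis inside each eigenspace of $T$ associated with a positive eigenvalue, that two such choices differ by an orthogonal change of coordinates \emph{within each eigenspace separately}, and that the Euclidean norm is invariant under orthogonal maps.

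First I would make precise the pointwise meaning of the eigenfunctions. Since $T$ is Hilbert--Schmidt it is compact and self-adjoint, so its nonzero eigenvalues form a finite set or a sequence tending to $0$, each with finite multiplicity. If $T\phi=\lambda\phi$ with $\lambda\neq 0$, then $\phi$ lies in the range of $P$ (the orthogonal complement of the constants), so $P\phi=\phi$ and $\lambda\phi=P\mathcal{K}\phi=\mathcal{K}\phi-\langle\mathcal{K}\phi,1\rangle\, 1$; hence
\[
\phi(x)=\frac{1}{\lambda}\left(\int_X K(x,y)\phi(y)\,d\mu(y)-\langle\mathcal{K}\phi,1\rangle\right),
\]
and the right-hand side, by the argument already used to show that $\mathcal{K}$ is Hilbert--Schmidt under~\eqref{eq_d4int1}, is a well-defined continuous function of $x\in X$. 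This singles out a canonical representative of every eigenfunction associated with a nonzero eigenvalue, and these are the only eigenfunctions entering either the MDS map~\eqref{eq:MDS1} or the right-hand side of~\eqref{eq_aeinj11} with a nonzero coefficient; I fix these representatives once and for all.

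Next, enumerate the distinct positive eigenvalues of $T$ as $\Lambda_1>\Lambda_2>\cdots>0$ with multiplicities $m_1,m_2,\ldots$, and let $E_k\subset L^2(X,\mu)$ be the ($m_k$-dimensional) eigenspace of $\Lambda_k$. Because~\eqref{eq:MDS1} lists the $\lambda_j^+$ in nonincreasing order, every MDS map uses, in the block of coordinates indexed by $m_1+\cdots+m_{k-1}<j\le m_1+\cdots+m_k$, the functions $\sqrt{\Lambda_k}\,\psi_1,\ldots,\sqrt{\Lambda_k}\,\psi_{m_k}$ for some orthonormal basis $\{\psi_i\}$ of $E_k$; the partition of coordinates into blocks is thus the same for every MDS map. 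Given two MDS maps $\mathcal{M}^1,\mathcal{M}^2$ built from orthonormal bases $\{\psi_i^k\}$ and $\{\chi_i^k\}$ of $E_k$, there is an orthogonal matrix $O_k$ with $\chi_i^k=\sum_j (O_k)_{ij}\psi_j^k$. Since the common scalar $\sqrt{\Lambda_k}$ commutes with the linear map $O_k$, reading this identity off the $k$-th coordinate block shows that the $k$-th block of $\mathcal{M}^2(x)$ equals $O_k$ applied to the $k$-th block of $\mathcal{M}^1(x)$, for every $x\in X$; hence the $k$-th block of $\mathcal{M}^2(x)-\mathcal{M}^2(y)$ is $O_k$ applied to the $k$-th block of $\mathcal{M}^1(x)-\mathcal{M}^1(y)$, and the two have the same Euclidean norm in $\R^{m_k}$. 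Summing the squares of the block norms over $k$ (a termwise identity of elements of $[0,+\infty]$, since the $\ell^2$-norm decomposes over the blocks) yields $\Vert\mathcal{M}^1(x)-\mathcal{M}^1(y)\Vert_2=\Vert\mathcal{M}^2(x)-\mathcal{M}^2(y)\Vert_2$ for all $x,y\in X$.

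For the last assertion, in the right-hand side $\sum_i\lambda_i(\phi_i(x)-\phi_i(y))^2$ of~\eqref{eq_aeinj11} the terms with $\lambda_i=0$ vanish, so only eigenfunctions with nonzero eigenvalue matter, grouped into the finite-dimensional eigenspaces of the operator in question attached to its nonzero eigenvalues (positive or negative). The computation of the previous paragraph applies verbatim to each such eigenspace, with the scalar $\lambda_i$ — constant on the eigenspace, its sign now immaterial — in place of $\sqrt{\lambda_j^+}$; hence the partial sum over each eigenspace, and therefore the whole sum, is unchanged when one orthonormal system of eigenfunctions is replaced by another. The only genuinely delicate point is the one handled in the second paragraph, namely fixing canonical pointwise representatives so that the pointwise statements are meaningful; once that is done, everything reduces to the invariance of the Euclidean norm under a block-diagonal orthogonal change of basis.
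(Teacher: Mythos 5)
Your proof is correct and follows essentially the same route as the paper's: both reduce the statement to the fact that two orthonormal bases of each finite-dimensional eigenspace of a nonzero eigenvalue differ by an orthogonal transformation, which preserves the Euclidean norm of the corresponding coordinate block, and then sum over eigenspaces. Your additional care in fixing canonical pointwise representatives of the eigenfunctions (so that the ``for all $x,y$'' statement is meaningful) is a detail the paper leaves implicit, but it does not change the argument.
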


\begin{proof}
Since $T$ is compact, every non-zero eigenspace of $T$ is finite-dimensional. A new choice of the orthonormal set of $\{\phi_j^+\}$ is obtained from $L^2(X,\mu)$-orthogonal transformation of each eigenspace $E_{\lambda_{\alpha}^+}$, with the eigenvalue $\lambda_{\alpha}^+>0$. Let $\{ \phi_j^{\alpha}\}$ and $\{ \tilde{\phi}_j^{\alpha} \}$ be two orthonormal bases of $E_{\lambda_{\alpha}^+}$. The space $E_{\lambda_{\alpha}^+}$ with $L^2$-norm can be identified as a Euclidean space with the standard norm, and orthogonal transformation on Euclidean spaces preserves Euclidean distances. Thus, we have
\begin{align*}
\sum_j \left(  \phi_j^{\alpha}(x)-\phi_j^{\alpha}(y)    \right)^2= \sum_j \left(  \tilde{\phi}_j^{\alpha}(x)-\tilde{\phi}_j^{\alpha}(y)    \right)^2
\end{align*}
Combining the equation above with the defining formula of MDS maps in \eqref{eq:MDS1}, we obtain the desired equality.
The independence of the right-hand side of~\eqref{eq_aeinj11} on the choice of eigenfunctions $\phi_j$ of $\mathcal{K}$ is shown in the same way. 
\end{proof}

\subsection{Review on Riemannian symmetric spaces}
\label{sec:Riem sym prelim}
We give a brief review of the basic properties of Riemannian symmetric spaces in this section. We begin by recollecting the basic facts on Riemannian manifolds which will be used in this paper. The metric $g$ of a connected Riemannian manifold $(M^n,g)$ defines a volume measure $\mu$ and a distance function $d$ on $M$, where
\begin{align*}
	d(x,y)=\inf\lbrace \mathrm{length}(\gamma): \gamma\ \mathrm{is\ a\ curve\ from}\ x\ to\ y \rbrace.
\end{align*}
This yields a metric measure space $(M,d,\mu)$. By Hopf-Rinow theorem, $(M,d)$ is a complete metric space if $(M,g)$ is geodesically complete. If $(M,g)$ is complete and connected, then for any $x,y\in M$, there exists a distance minimizing geodesic. For any $x\in M$, the tangential cut locus at $x$ is the set of $v\in T_xM$ such that $\exp(tv)$ is a minimizing geodesic for $t\in [0,1]$, but fails to be a minimizing geodesic for any $t>1$. The cut locus $\mathcal{C}_x$ at $x$ is the image of this set under the exponential map at $x$. If $y\in\mathcal{C}_x$, we have either $y$ is conjugate to $x$ or there is more than $1$ distance minimizing geodesic from $x$ to $y$ \cite[lemma~ 8.2]{Petersen1998}.  Therefore, we have $y\in\mathcal{C}_x$ if and only if $x\in\mathcal{C}_y$. Define the symmetric subset of $M\times M$ by
$$\mathcal{C}=\lbrace (x,y)\in M\times M: x\in\mathcal{C}_y\rbrace$$
It is a well-known fact that the function $d^2(x,\cdot)$ is smooth outside $\mathcal{C_x}$. 

By a Laplacian on the Riemannian manifold, we always mean the Laplace-Beltrami operator.

Let $(M,g)$ be a Riemannian manifold. Recall that a local geodesic symmetry at $p\in M$ is a local diffeomorphism $r_p$ on a neighbourhood of $p$ such that for all geodesic $\gamma(t)$ with $\gamma(0)=p$, we have $r_p(\gamma(t))=\gamma(-t)$. 
\begin{definition}
	A Riemannian manifold $(M,g)$ is called a symmetric space if for every $p\in M$, the local geodesic symmetry $r_p$ can be extended to a global isometry on $M$ fixing $p$.
\end{definition}
It is obvious from the definition that all symmetric spaces are complete, and all connected symmetric spaces are homogeneous, i.e., the isometry group $G$ acts transitively on $(M,g)$. In fact, every connected symmetric space is a reductive homogeneous space as follows. Denote $G$ and $K$ the isometry group and the isotropy of some $p\in M$, respectively. Let $\mathfrak{g}$ and $\mathfrak{k}$ be the corresponding Lie algebras of $G$ and $K$. The geodesic symmetry $r_p\in G$ at $p$ satisfies $r_p^2=Id$. Hence $Ad(r_p)$ is an involutive Lie algebra automorphism of $\mathfrak{g}$. The Lie algebra $\mathfrak{g}$ admits a decomposition $\mathfrak{g}=\mathfrak{k}+\mathfrak{m}$ such that $\mathfrak{k}$ and $\mathfrak{m}$ are the eigenspaces of $Ad(r_p)$ corresponding to the eigenvalues $1$ and $-1$, respectively.

We close this section with some geodesic properties of symmetric space which will be used later in the proof. Let $\gamma: \mathbb{R}\rightarrow M$ be a complete geodesic for the symmetric space $(M,g)$. Then the following family of composed map $\tau_s=r_{\gamma(s)}\circ r_{\gamma(0)}$ is a 1-parameter subgroup of the isometry group $G$. 
Easily we have 
\[\tau_s(\gamma(t))=\gamma(t+s).\] 
Such maps are called \textit{geodesic transvections} along $\gamma$. In terms of the cut locus, although determining the geometry of cut loci of a general Riemannian manifold can be difficult, the cut loci of symmetric spaces have been well studied, see~\cite{Helgason2001,Sakai1978,Takeuchi1978}. In particular, for a compact symmetric space $(M^n,g)$, the cut locus at any $x\in M$ is a finite disjoint union of regular submanifolds with possible different dimensions \cite[theorem~3.3]{Takeuchi1978}. Taking the union of the submanifolds of dimension $n-1$ in this decomposition of $\mathcal{C}_x$, the Riemannian volume density $\mu$ together with the perpendicular unit vector fields define a measure $\underline{\mu}_x$ on $\mathcal{C}_x$. Hence, we may view $\mathcal{C}_x$ as a ``piece-wise smooth" manifold of dimension $n-1$. Note that $\underline{\mu}_x$ can be zero if the decomposition of $\mathcal{C}_x$ has no components of dimension $n-1$, for example when $M$ is the standard sphere.

\section{The MDS map for closed connected symmetric spaces}
\label{sec: sym space commute operator}
From now on, we focus on the MDS maps of closed connected symmetric spaces. Let $(S,g)$ be a closed connected symmetric space. Denote $d$ and $\mu$ the distance function and Borel measure induced by $g$ as before (i.e., the geodesic distance and the Riemannian volume measure). Since $S$ is compact, it is well known that there is an orthonormal basis of $L^2(S)$ contained in $C^{\infty}(S)$ consisting of Laplacian eigenfunctions, see e.g., \cite[p.~2]{zelditch2017eigenfunctions} and \cite[theorem~2.2.17]{levitin2023topics}. In addition, each eigenvalue of $\Delta$ has finite multiplicity, and eigenspaces of distinct eigenvalues are mutually orthogonal.

Several easy consequences follow from our assumption. Since $S$ is compact, the integral kernel of $\mathcal{K}$ is bounded and uniformly continuous on $S\times S$. Thus by Cauchy–Schwarz inequality, each eigenfunction of $\mathcal{K}$ (also for $T$) is continuous. In addition, for each non-zero eigenvalue of $\mathcal{K}$, the corresponding eigenspace is finite-dimensional.

As both the Laplacian operator and the integral kernel $K(x,y)$ are closely related to the distance function, we would like to establish a relation between them. We begin with the following lemma leading to the symmetric property of the integral kernel of $\mathcal{K}$ for symmetric spaces.

\begin{lemma}
	\label{lemma:K-D sym 1}
	Let $\mu\otimes\mu$ be the product measure on $S\times S$. For a compact symmetric space $S$, the integral kernel of $\mathcal{K}$ satisfies
	\begin{align}
		\label{eqn: pt laplace sym}
		\Delta_x K(x,y)=\Delta_y K(x,y), \mathrm {for\ a.e.}\ (x,y)\in S\times S
	\end{align}
\end{lemma}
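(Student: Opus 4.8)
The plan is to exploit the abundance of isometries of a symmetric space, concretely the midpoint geodesic symmetry. The only nontrivial ingredient is geometric: for a generic pair $(x,y)$ there is a \emph{global} isometry of $S$ that interchanges $x$ and $y$. Granting this, \eqref{eqn: pt laplace sym} follows from two elementary facts — the isometry-invariance of both the squared distance and the Laplace--Beltrami operator, together with the symmetry $d(x,y)=d(y,x)$ of the kernel.

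First I would restrict attention to pairs $(x,y)$ outside the set $\mathcal{C}=\{(x,y):x\in\mathcal{C}_y\}$. Since $\mathcal{C}_x$ has zero measure for each $x$ (being contained in a finite union of submanifolds of dimension at most $n-1$), and $\mathcal{C}$ is closed, Fubini's theorem gives $(\mu\otimes\mu)(\mathcal{C})=0$, so it suffices to prove the identity for $(x,y)\notin\mathcal{C}$. For such a pair there is a neighbourhood of $(x,y)$ disjoint from $\mathcal{C}$, hence $K(\cdot,y)=-\tfrac12 d^2(\cdot,y)$ is smooth near $x$, $K(x,\cdot)$ is smooth near $y$, and the minimizing geodesic $\gamma\colon[0,1]\to S$ from $x$ to $y$ is unique.

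For the geometric core, set $m:=\gamma(1/2)$ and let $r_m$ be the geodesic symmetry at $m$, which is a global isometry precisely because $S$ is symmetric. Since $r_m$ fixes $m$ with $d(r_m)_m=-\mathrm{Id}$, the curve $s\mapsto r_m(\gamma(\tfrac12+s))$ is the geodesic through $m$ with initial velocity $-\dot\gamma(\tfrac12)$, hence equals $s\mapsto\gamma(\tfrac12-s)$; evaluating at $s=\pm\tfrac12$ yields $r_m(x)=y$ and $r_m(y)=x$ (equivalently, $r_m=\tau_{1/2}\circ r_{\gamma(0)}$ for the transvection $\tau_s$ along $\gamma$). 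As $r_m$ is an isometry it preserves $\mathcal{C}$, so $(y,x)=(r_m x,r_m y)\notin\mathcal{C}$ as well, which guarantees that every Laplacian appearing below is evaluated at a point where $d^2$ is smooth. Now write $h:=K$ on $S\times S$ and let $\Delta_1,\Delta_2$ denote the Laplacian acting on the first, resp.\ second, variable. Freezing the second variable in the identity $h(r_m p,r_m q)=h(p,q)$ and using that the Laplacian commutes with pullback by the isometry $r_m$ gives $(\Delta_1 h)(r_m p,r_m q)=(\Delta_1 h)(p,q)$; at $(p,q)=(x,y)$ this reads $(\Delta_1 h)(y,x)=(\Delta_1 h)(x,y)$. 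On the other hand, $h(p,q)=h(q,p)$ gives directly $(\Delta_1 h)(y,x)=(\Delta_2 h)(x,y)$. Chaining the two, $\Delta_x K(x,y)=(\Delta_1 h)(x,y)=(\Delta_2 h)(x,y)=\Delta_y K(x,y)$ for every $(x,y)\notin\mathcal{C}$, which proves \eqref{eqn: pt laplace sym}.

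I expect the main obstacle to be the rigorous justification of the geometric step — that the midpoint symmetry $r_m$ is globally defined (this genuinely uses the symmetric-space hypothesis, not merely homogeneity) and truly swaps $x$ and $y$ — together with the accompanying bookkeeping to keep all evaluations off the cut locus, where $d^2$ may fail to be twice differentiable, and where in particular the commutation of $r_m\times r_m$ with $\mathcal{C}$ is used.
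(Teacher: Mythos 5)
Your proposal is correct and follows essentially the same route as the paper: both reduce to pairs off the cut-locus set $\mathcal{C}$ (a $\mu\otimes\mu$-null set), use the geodesic symmetry at the midpoint of the unique minimizing geodesic as a global isometry interchanging $x$ and $y$, and combine the isometry-invariance of the Laplacian with the symmetry $K(x,y)=K(y,x)$ to conclude. Your write-up is, if anything, slightly more careful about keeping all evaluations off $\mathcal{C}$.
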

\begin{proof}
	First note that the integral kernel $K(x,y)=-\dfrac{1}{2} d^2(x,y)$ is always symmetric, i.e. $K(x,y)=K(y,x)$. For a fixed $x\in S$, the function $K(x,\cdot)$ is smooth on $S\setminus \mathcal{C}_x$, which is an open set of full measure in $S$. Since $x\in C_y$ if and only if $y\in \mathcal{C}_x$, the functions $\Delta_x K(x,y)$ and $ \Delta_y K(x,y)$ are well defined a.e. on $S\times S$ with respect to $\mu\otimes \mu$.
	
	Suppose that $x\notin C_{y}$, and let $\gamma:[-1,1]\rightarrow S$ be the unique minimizing geodesic from $x$ to $y$. The geodesic symmetry $r_0$ at the median $\gamma(0)$ is an isometry interchanging $x$ and $y$. Denote $K_y$ the function $K(\cdot,y)$. Since $r_0$ is an isometry, in a neighbourhood $U$ of $y$ we have
	$$(\Delta K_y)(r_0(z))=\Delta (K_y\circ r_0)(z),\quad \mbox{for all }z\in U.$$
	Denote $\Delta_1, \Delta_2$ the Laplacian with respect to the first and second coordinates in $S\times S$, respectively. We also have
	\begin{align*}
		\Delta (K_y\circ r_0)(z)=\Delta_1 K(r_0(z),y)=\Delta_1 K(z,r_0(y))=\Delta_1 K(z,x)=\Delta_2 K(x,z).
	\end{align*}
	Taking $z:=y$, we get the desired equality.
\end{proof}
The following proposition shows that the operator $T$ commutes with the extension of the  Laplacian operator $\Delta^D$.

\begin{proposition}
	The operator $T$ commutes with Laplacian $\Delta$ on closed connected symmetric spaces in the following sense,
	\begin{align}
	\label{eqn:Laplace commute1}
	\langle Tf, \Delta h\rangle =\langle T\Delta f,h\rangle,\quad \mbox{for all } f,h\in C^{\infty}(S).
	\end{align}
	Thus $T$ preserves each Laplacian eigenspaces.
\end{proposition}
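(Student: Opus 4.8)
The plan is to strip off the projectors in $T=P\mathcal{K}P$ and reduce~\eqref{eqn:Laplace commute1} to a distributional strengthening of Lemma~\ref{lemma:K-D sym 1}. Since $\mu$ is a probability measure, $Pu=u-\int_S u\,d\mu$ on $C^\infty(S)$, and because $\Delta$ annihilates constants while $\int_S\Delta f\,d\mu=0$ on a closed manifold, one has $\Delta Pf=\Delta f=P\Delta f$. Using this together with the self-adjointness of $P$, $\mathcal{K}$ and $\Delta$, both sides of~\eqref{eqn:Laplace commute1} become, with $u:=Pf$ and $v:=Ph$, the two sides of $\langle\mathcal{K}u,\Delta v\rangle=\langle\Delta u,\mathcal{K}v\rangle$; hence it suffices to prove this identity for all $u,v\in C^\infty(S)$. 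Expanding $\mathcal{K}u$ and $\mathcal{K}v$ through the kernel $K$ and using Fubini (legitimate since $K\in L^\infty(S\times S)$), the two sides equal $\langle\Delta_x K,\,v\otimes u\rangle$ and $\langle\Delta_y K,\,v\otimes u\rangle$, respectively, as pairings of distributions on $S\times S$ against test functions; since the products $v\otimes u$ span a dense subspace of $C^\infty(S\times S)$, the claim is equivalent to
\[
\Delta_x K=\Delta_y K\qquad\text{in }\mathcal D'(S\times S),
\]
i.e.\ to upgrading the a.e.\ equality of Lemma~\ref{lemma:K-D sym 1} to an equality of distributions.

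To prove this, set $D:=\Delta_x K-\Delta_y K$ and record four properties. First, $D$ is a finite signed Radon measure on $S\times S$: since $d^2(\cdot,y)$ is uniformly semiconcave on the compact manifold $S$, the distribution $\Delta_x K(\cdot,y)$ is, for each $y$, a signed measure of total variation bounded independently of $y$, so $\Delta_x K$—and likewise $\Delta_y K$—is a measure on the product. Second, $D$ is invariant under the diagonal action of the (compact) isometry group $G$ of $S$, immediately from $K(gx,gy)=K(x,y)$ and the commutation of $\Delta$ with isometries. Third, $D$ is \emph{anti-invariant} under the flip $\iota\colon(x,y)\mapsto(y,x)$, because $\iota^*K=K$ while $\iota^*(\Delta_y K)=\Delta_x K$. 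Fourth, by Lemma~\ref{lemma:K-D sym 1}, $D$ vanishes on the open set where $d^2$ is smooth, so $D$ is carried by the cut-locus set $\mathcal C\subset S\times S$.

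Next I would invoke the symmetric-space structure exactly as in the proof of Lemma~\ref{lemma:K-D sym 1}: for any $x,y\in S$ there is a minimizing geodesic joining them (Hopf--Rinow), and the geodesic symmetry at its midpoint extends to a \emph{global} isometry $g\in G$ with $g\cdot(x,y)=(y,x)$. Thus every $G$-orbit in $S\times S$ is $\iota$-stable, so $\iota$ descends to the identity on the orbit space; since $G$ is compact, disintegrating $D$ over its orbits into $G$-invariant pieces shows that every $G$-invariant signed measure on $S\times S$ is $\iota$-invariant. Together with the anti-invariance this forces $D=0$, which proves~\eqref{eqn:Laplace commute1}. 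Finally, if $\Delta\phi=\lambda\phi$ with $\phi\in C^\infty(S)$ and $\psi$ is a $\Delta$-eigenfunction with eigenvalue $\lambda'\ne\lambda$, then~\eqref{eqn:Laplace commute1} yields $\lambda'\langle T\phi,\psi\rangle=\langle T\phi,\Delta\psi\rangle=\langle T\Delta\phi,\psi\rangle=\lambda\langle T\phi,\psi\rangle$, so $\langle T\phi,\psi\rangle=0$; as the $\Delta$-eigenfunctions span $L^2(S,\mu)$, $T\phi$ lies in the $\lambda$-eigenspace.

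The main obstacle is precisely the passage from Lemma~\ref{lemma:K-D sym 1} to the distributional identity: the singular part of $D$ is not a technicality one may ignore. Already for $\mathbb{S}^1$, and for $\mathbb{RP}^n$ (whose cut locus at a point is a codimension-one $\mathbb{RP}^{n-1}$), the distributional Laplacian of $-\tfrac12 d^2(\cdot,y)$ carries a nonzero surface measure on the codimension-one part of $\mathcal C_y$, so one genuinely has to check that these singular parts are interchanged by $x\leftrightarrow y$—which is exactly what the $G$-invariance of $D$, combined with the midpoint-symmetry trick, provides. One can alternatively bypass the kernel analysis altogether: since $(G,K)$ is a Gelfand pair for a Riemannian symmetric space, $L^2(S,\mu)$ is a multiplicity-free $G$-module, so the $G$-equivariant self-adjoint operators $\mathcal{K}$ and $\Delta$ both act as scalars on each irreducible summand by Schur's lemma; hence $T=P\mathcal{K}P$ does too, and in particular commutes with $\Delta$ and preserves each of its eigenspaces.
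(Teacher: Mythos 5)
Your proof is correct, but it takes a genuinely different route from the paper's. The paper establishes $\langle\mathcal{K}f,\Delta h\rangle=\langle\mathcal{K}\Delta f,h\rangle$ by cutting $S$ along the cut locus, applying the divergence theorem on the resulting manifold with boundary, and matching the two boundary integrals over the cut locus term by term (the identity~\eqref{eqn: angle symmetry}) via geodesic transvections; this leans on Takeuchi's decomposition of cut loci of compact symmetric spaces and on the conjugate locus having codimension at least two. You instead recast the claim as the single distributional identity $\Delta_xK=\Delta_yK$ on $S\times S$ and kill the difference $D$ by symmetry alone: $D$ is invariant under the diagonal action of the compact isometry group $G$, anti-invariant under the flip $\iota$, and every $G$-orbit in $S\times S$ is $\iota$-stable because the geodesic symmetry at a midpoint of a minimizing geodesic from $x$ to $y$ is a \emph{global} isometry exchanging them. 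Averaging over Haar measure on $G$ then shows that every $G$-invariant distribution is $\iota$-invariant, forcing $D=0$. Both arguments rest on the same geometric input --- the midpoint symmetry --- but you use it globally (orbits are flip-stable) where the paper uses it pointwise (boundary terms match), and you bypass the divergence theorem and the cut-locus structure theory entirely. Your Gelfand-pair alternative is also valid and is the natural strengthening of the irreducibility argument the paper itself deploys later for spheres and projective spaces; it yields the stronger conclusion that $T$ acts as a scalar on each irreducible summand.

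Two minor remarks. Your properties (i) (that $D$ is a finite signed measure, via uniform semiconcavity of $d^2$) and (iv) (that $D$ is carried by $\mathcal{C}$, via Lemma~\ref{lemma:K-D sym 1}) are not actually needed: the averaging computation $\langle D,\varphi\rangle=\langle D,\overline{\varphi}\rangle$ together with $\overline{\varphi\circ\iota}=\overline{\varphi}$ (since $\overline{\varphi}$ is constant on orbits and $\iota$ preserves orbits) applies verbatim to $G$-invariant distributions, so the semiconcavity input --- true, but requiring a citation --- and even Lemma~\ref{lemma:K-D sym 1} itself can be dropped. Relatedly, disintegrating a signed measure over orbits is more delicate than necessary; averaging test functions over $G$ gives the same conclusion cleanly.
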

\begin{proof}
	Since the only harmonic functions on closed manifolds are constants, the Laplacian operator $\Delta$ commutes with the projection $P$ in \eqref{eq_KTdef1} when acting on smooth functions. As $T=P\mathcal{K}P$, it suffices to show $\mathcal{K}$ commutes $\Delta$ by the following equality.
	\begin{align}
	\langle \mathcal{K} f, \Delta h\rangle =\langle \mathcal{K}\Delta f,h\rangle,\quad \mbox{for all } f,h\in C^{\infty}(S).
	\end{align}
	\\
	Firstly, from Lemma \ref{lemma:K-D sym 1}, we have the following observation on symmetric spaces.
	\begin{align*}
		\Delta_x K(x,y)=\Delta_y K(x,y), \mathrm {for\ a.e.}\ (x,y)\in S\times S
	\end{align*}
	For any $x\in S$, denote $\mathcal{C}_x$ the cut locus of the function $K(x,\cdot)$ as before. We cut the manifold $S$ from $\mathcal{C}_x$, and obtain a manifold $M_x$ with boundary $\partial M_x$. Let $i_x: \partial M_x\rightarrow \mathcal{C}_x$ be the canonical projection on the boundary. Using the decomposition theorem of $\mathcal{C}_x$ mentioned in Section \ref{sec:Riem sym prelim} (see \cite{Takeuchi1978} for details), we obtain a decomposition of $\partial M_x$. Each component $\mathcal{C}_x(i)$ of dimension $n-1$ in $\mathcal{C}_x$ is a regular submanifold, and its preimage under $(i_x)^{-1}$ is a double cover $\partial M_x^0(i)\sqcup \partial M_x^1(i)$. Thus, by restricting only to components of $\partial M_x$ of dimension $n-1$, the unit outer normal $N_x$ defines a measure $\underline{\mu}^x$ on $\partial M_x$. The space $(\partial M_x,\underline{\mu}^x)$ is isomorphic to $(\mathcal{C}_x,\underline{\mu}_x))\oplus (\mathcal{C}_x,\underline{\mu}_x)$ as measure spaces. For a compact symmetric space $(S^n,g)$, the conjugate locus at any $x\in S$ has dimension strictly less than $n-1$ \cite[theorem~7.3.3]{Helgason2001}. Since $\exp_x$ is non-singular outside the tangential conjugate locus, $\nabla_y K(x,y)$ is well-defined a.e. on $\partial M_x$.

	For all $f,h\in C^{\infty}(S)$, we can apply the divergence theorem to obtain
	\begin{align*}
		\langle \Delta^D \mathcal{K} f,h\rangle
		=&\int_S f(y)\int_S \Delta_x h(x) K(x,y) d\mu(x) d\mu(y)\\
		=& \int_S f(y)\int_{\partial M_y} K(x,y)\langle \nabla_x h,N^y(x) \rangle d\underline{\mu}^y(x) d\mu(y)\\
		& -\int_S f(y) \int_S \langle \nabla_x h, \nabla _x K(x,y)\rangle d\mu(x) d\mu(y)
	\end{align*}
	Note that for each $y\in S$, the integral
	$$\int_{\partial M_y} K(x,y)\langle \nabla_x h,N^y(x) \rangle d\underline{\mu}^y (x) $$
	vanishes because $h$ is smooth on $S$. Therefore, we get
	\begin{align}
		\label{eqn:comm lhs}
		\langle \Delta^D \mathcal{K} f,h\rangle =  -\int_S f(y) \int_S \langle \nabla_x h, \nabla _x K(x,y)\rangle d\mu(x) d\mu(y).
	\end{align}
	Analogously, we obtain
	\begin{align}
		\label{eqn:comm rhs}
		\langle \mathcal{K} \Delta f,h\rangle  = -\int_S h(x)\int_S \langle \nabla_y f,\nabla_y K(x,y)\rangle d\mu(y)d\mu(x)
	\end{align}
	On the other hand, from \eqref{eqn: pt laplace sym} and the divergence theorem, we get
	\begin{align*}
		& \int_S h(x)\left( \int_{\partial M_x}f(y)\langle \nabla_y K,N^x_y\rangle d\underline{\mu}^x(y)-\int_S \langle \nabla_y f, \nabla_y K\rangle d \mu(y) \right) d\mu(x)\\
		= & \int_S f(y) \left( \int_{\partial M_y}h(x)\langle \nabla_x K,N^y_x\rangle d\underline{\mu}^y(x)-\int_S \langle \nabla_x h, \nabla_x K\rangle d \mu(x) \right) d\mu(y)
	\end{align*}
	Comparing with the $(\ref{eqn:comm lhs},\ref{eqn:comm rhs})$ obtained earlier, we need to prove
	\begin{equation}
		\begin{aligned}
			\label{eqn:divergence sym 1}
			& \int_S h(x) \int_{\partial M_x}f(y)\langle \nabla_y K,N^x_y\rangle d\underline{\mu}^x(y) d\mu(x)\\
			=&   \int_S f(y)  \int_{\partial M_y}h(x)\langle \nabla_x K,N^y_x\rangle d\underline{\mu}^y(x) d\mu(y)
		\end{aligned}
	\end{equation}
	For any $y\in \mathcal{C}_x(i)$ not conjugate to $x$, let $y_j\in M^j_x(i)$ be the preimage of $y$ for $j=0,1$.  Since distance minimizing geodesics segments starting from $x$ cannot intersect $\mathcal{C}_x$ except for the end-points, there exists distance minimizing unit speed geodesics $\gamma_j$ in $M_x$ from $x$ to $y_j$ for $j=0,1$. For simplicity, denote the geodesic in $S$ corresponding to $\gamma_j$ by the same notation.
	The geodesic transvection $\tau_j$ along $\gamma_j$ sending $x$ to $y$ is an isometry. It maps $\mathcal{C}_x$ to $\mathcal{C}_y$, and the curve $\gamma_j$ into itself with $\tau_j\circ r_x(y)=x$. Thus, the component of $\mathcal{C}_y$ containing $x$ is a regular submanifold of dimension $n-1$, and the preimage $i^{-1}_y(x)$ also contains exactly two points in $\partial M_y$. Let $x_j$ be induced by the reversed curve of $\gamma_j$. Because $y_j$ is not conjugate to $x$ and $x_j$ is not conjugate to $y$, we have
	$\nabla_y K(x,y_j)$ and $\nabla_x K(y,x_j)$ are well-defined.
	If we can show
	\begin{align}
		\label{eqn: angle symmetry}
		\sum_{j=0}^1 \langle\nabla_y K(x,y_j),N^x_{y_j} \rangle=
		\sum_{j=0}^1 \langle\nabla_x K(y,x_j),N^y_{x_j} \rangle,
	\end{align}
	then Equality (\ref{eqn:divergence sym 1}) is just an application of the Fubini theorem. Since $\gamma_j$ is a distance minimizing geodesic segment  from $x$ to $y_j$, we have  \begin{align*}
		\nabla_y K(x,y_j)=- d(x,y)\dot{\gamma_j}(d(x,y))
	\end{align*}
	Combining the equality above with the fact $\tau_j\circ r_x$ is an isometry mapping $y$ to $x$, we obtain
	\begin{align*}
		(\tau_j\circ r_x)_*(\nabla_y K(x,y_j))=\nabla_x K(x_j,y),\ (\tau_j\circ r_x)_*(N^x_{y_j})=N^y_{x_j},\quad \mbox{for all } j=0,1
	\end{align*}
	Then the equation \eqref{eqn: angle symmetry} holds. Hence, the Laplacian operator commutes with the integral operator $T$ for closed symmetric spaces in the sense of \eqref{eqn:Laplace commute1}.\\
	\\
	Finally, note that for all $f_1,f_2\in C^{\infty}(S)$ such that $\Delta f_i=\lambda_i f$ with $i=1\ \mathrm{and}\ 2$, we have
\begin{align*}
\langle Tf_1,\lambda_2 f_2\rangle=\langle Tf, \Delta f_2\rangle=\langle T\Delta f_1,f_2\rangle =\langle \lambda_1 Tf_1,f_2\rangle.
\end{align*}
Therefore, $Tf_1$ is perpendicular to all eigenspaces of $\Delta$ corresponding to eigenvalues distinct from $\lambda_1$. Since there is an orthonormal basis of $L^2(S)$ consisting of $\Delta$-eigenfunctions, we see
$Tf_1$ is contained in the eigenspace of $\lambda_1$ for $\Delta$. This completes the proof.
\end{proof}

\section{MDS for elementary symmetric spaces}
\subsection{Products of spheres}
We can derive the spectrum of $T$ for product spaces based on the components in the decomposition. This generalizes the discussion of the product of two spaces given in Section 6.2 of \cite{Stepanov2022}. Let $(X,d,\mu)$ be the metric measure space induced by the product manifold 
\begin{align*}
	X=\prod_{i=1}^N X_i,\ g=\sum_{i=1}^N g_i;
\end{align*}
where each $g_i$ is the Riemannian metric on a compact connected manifold $X_i$. Denote $d_i$ and $\mu_i$ the Riemannian distance and the normalized Riemannian volume on $(X_i,g_i)$. It follows that for $x=(x_i)\in X$ and $y=(y_i)\in X$, we have
\begin{align*}
	\mu=\prod_{i=1}^N \mu_i,\  d^2(x,y)=\sum_{i=1}^N d_i^2(x_i,y_i).
\end{align*}
Since the space $(X_i,g_i)$ are all connected, the Hilbert spaces $L^2(X,\mu)$ and $L^2(X_i,\mu_i)$ are all separable. One would naturally expect the MDS maps and the associated operator to be in the form of Cartesian products of those for each component. This is exactly the case as in \cite[section~6.2]{Stepanov2022}. Suppose $\{\phi_i^j\}$ is an orthonormal basis diagonalizing the associated operator $T_i$ for $(X_i,d_i,\mu_i)$. Then $\{ \phi_{i_1}^{j_1}\cdots\phi_{i_N}^{j_N}\}$ is an orthonormal basis for $L^2(X,\mu)$ by Fubini theorem.  According to \cite[proposition~6.2]{Stepanov2022}, the associated operator $T$ is diagonalizable with respect to this basis. Since the constant functions are in the kernel of $T_i$ for all $1\leq i\leq N$, we can choose $\{ \phi_i^j\}$ such that the non-constant elements are in $1^{\perp}$. Then for $\Phi= \phi_{i_1}^{j_1}\cdots\phi_{i_N}^{j_N}$, we have $T(\Phi)=0$ unless the components of $\Phi$ contain exactly 1 non-constant function. To see this, we first compute
\begin{align*}
	-2K(\Phi)(x)=\sum_{i=1}^N \int_X d_i^2(x_i,y_i) \phi_{i_1}^{j_1}(y_1)\cdots \phi_{i_n}^{j_N}(y_N) d\mu(y)
\end{align*}
If there is more than one non-constant component in $\Phi$, apply Fubini theorem to the integrals above. We can re-order the integration such that inside each integral, the inner most term satisfies
$$\int_{X_l} \phi_l^{j_l} du_l=0$$
Therefore, if $N>1$, the kernel of $T$ is infinite dimensional.

If all components of $\Phi$ are constant, clearly $T(\Phi)=P\mathcal{K}P(\Phi)=0$.
Assume $\Phi=\phi_1^{j_1}$ is non-constant and $T_1(\phi_1^{j_1})=\lambda_1^{j_1} \phi_1^{j_1}$. We obtain $T(\Phi)=\lambda_1^{j_1} \Phi$. As a result, for the non-zero spectra of $T$ and $T_i$, we have
\begin{align}
\label{eqn:prod spec1}
	Spec(T)\setminus \{ 0\} = \sqcup_{i=1}^N \left( Spec(T_i) \setminus \{ 0\}\right),
\end{align}
where both sides of the relation above are eigenvalues counted with multiplicity. This implies $T$ is a trace-class operator if and only if  each $T_i$ is trace-class.

Since the map $T$ for any compact space has at least 1 positive eigenvalue, the product of $N$ Riemannian manifolds shall have at least $N$ positive eigenvalues counting multiplicity. Summing up, we have the following assertion.
\begin{proposition}
	Let $X=\prod_{i=1}^N X_i$ and $g=\sum_{i=1}^N g_i$ be the product space of compact connected Riemannian manifolds. Denote $d$ and $\mu$ the distance and measure induced by $g$. Then the map $T$ for $(X,d,\mu)$ has at least $N$ positive eigenvalues counting multiplicity, and $T$ is a trace-class operator if and only if each $T_i$ is trace-class.
\end{proposition}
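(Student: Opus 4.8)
The plan is to derive both claims from the product spectral identity~\eqref{eqn:prod spec1}, i.e.\ $Spec(T)\setminus\{0\}=\sqcup_{i=1}^N\big(Spec(T_i)\setminus\{0\}\big)$ with eigenvalues counted with multiplicity, already established above, together with one extra input: that the operator $T_i$ associated to each factor $(X_i,d_i,\mu_i)$ has at least one strictly positive eigenvalue. Granting this, the ``at least $N$'' part is immediate, since a positive element of $Spec(T)\setminus\{0\}$ is, via the disjoint union, a positive element of exactly one $Spec(T_i)\setminus\{0\}$ and conversely, so the number of positive eigenvalues of $T$ (with multiplicity) equals $\sum_{i=1}^N\#\{\text{positive eigenvalues of }T_i\}\ge N$; and the trace-class part is pure bookkeeping with nonnegative series.

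First I would prove the auxiliary fact that $T_i$ has a positive eigenvalue --- this is exactly the statement, quoted just before the proposition, that $T$ has a positive eigenvalue on any nontrivial compact space, applied to $X_i$. Since $X_i$ is a compact Riemannian manifold with more than one point and $\mu_i$ a probability measure, $T_i=P\mathcal{K}_iP$ is self-adjoint, compact and Hilbert--Schmidt (here $P$ projects off the constants in $L^2(X_i,\mu_i)$), so it is enough to produce a function on which the quadratic form of $T_i$ is strictly positive. I would fix $p\neq q$ in $X_i$, take for small $\varepsilon>0$ disjoint metric balls $A=B(p,\varepsilon)$, $B=B(q,\varepsilon)$ of positive $\mu_i$-measure, and set $u_\varepsilon:=\mathbf 1_A/\mu_i(A)-\mathbf 1_B/\mu_i(B)$, which has zero mean, hence $u_\varepsilon=Pu_\varepsilon$ and $\langle T_iu_\varepsilon,u_\varepsilon\rangle=\langle\mathcal{K}_iu_\varepsilon,u_\varepsilon\rangle=-\tfrac12\iint d_i^2(x,y)u_\varepsilon(x)u_\varepsilon(y)\,d\mu_i(x)\,d\mu_i(y)=-\tfrac12(a_\varepsilon+b_\varepsilon-2c_\varepsilon)$, where $a_\varepsilon,b_\varepsilon,c_\varepsilon$ are the averages of $d_i^2$ over $A\times A$, $B\times B$, $A\times B$. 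As $\varepsilon\to0$ the diameters of $A,B$ shrink so $a_\varepsilon,b_\varepsilon\to0$, while $d_i^2\to d_i^2(p,q)$ uniformly on $A\times B$ so $c_\varepsilon\to d_i^2(p,q)$; thus $\langle T_iu_\varepsilon,u_\varepsilon\rangle\to d_i^2(p,q)>0$, and for $\varepsilon$ small the top eigenvalue of $T_i$ is $\ge\langle T_iu_\varepsilon,u_\varepsilon\rangle/\|u_\varepsilon\|_{L^2}^2>0$.

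For the trace-class equivalence I would use that, $T$ being self-adjoint and compact, it is trace-class exactly when $\sum_j|\lambda_j|<\infty$ ranging over all its eigenvalues, the zero eigenvalue (of possibly infinite multiplicity) contributing nothing to this sum; by~\eqref{eqn:prod spec1} the nonzero eigenvalues of $T$ are, with multiplicity, the disjoint union of those of the $T_i$, so $\sum_{\lambda\in Spec(T)}|\lambda|=\sum_{i=1}^N\sum_{\lambda\in Spec(T_i)\setminus\{0\}}|\lambda|$. A finite sum of nonnegative numbers is finite if and only if each summand is finite, which is precisely ``$T$ trace-class $\iff$ every $T_i$ trace-class''.

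I do not expect a genuine obstacle here: once~\eqref{eqn:prod spec1} is in hand everything is formal, the only slightly substantive point being the positivity of a single eigenvalue of each $T_i$, which the two-bump test function above settles. The one thing to record as a standing hypothesis is that each factor $X_i$ is nontrivial (more than one point) --- automatic for positive-dimensional connected manifolds --- since a one-point factor would contribute $T_i=0$ and hence no positive eigenvalue, breaking the ``at least $N$'' count.
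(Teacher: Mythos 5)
Your proof is correct and follows essentially the same route as the paper: both arguments rest on the product spectral identity~\eqref{eqn:prod spec1} plus the fact that each factor operator $T_i$ has at least one strictly positive eigenvalue, with the trace-class equivalence being immediate bookkeeping. The only difference is that the paper simply asserts the positivity of one eigenvalue of $T_i$ on a compact space, whereas you supply a clean two-bump test-function argument for it --- a worthwhile addition, and your sign computation $\langle \mathcal{K}_i u_\varepsilon,u_\varepsilon\rangle=-\tfrac12(a_\varepsilon+b_\varepsilon-2c_\varepsilon)\to d_i^2(p,q)>0$ checks out.
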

From now on, let $X$ be a product of spheres. We know from \cite[section~6.1]{Stepanov2022} the operator $T_i$ for each component sphere is trace-class and has signature $(1,\infty,\infty)$. Combining 
  \eqref{eqn:prod spec1} and the proposition above, we obtain the following statement.
 \begin{corollary}
 Let $X$ be a finite product of $N$ spheres (including circles), and define the distance $d$ and measure $\mu$ as above. The operator $T$ for $(X,d,\mu)$ is trace-class and has signature $(\infty,\infty,\infty)$ for $N>1$.
 \end{corollary}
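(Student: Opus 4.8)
The plan is to assemble the corollary from three facts already in hand: the spectral identity~\eqref{eqn:prod spec1}, the trace-class equivalence in the preceding proposition, and the known signature $(1,\infty,\infty)$ of the operator $T_i$ attached to a single sphere $\mathbb{S}^{m_i}$, which is borrowed from \cite[section~6.1]{Stepanov2022}. First I would dispose of the trace-class claim: by the preceding proposition, $T$ is trace-class if and only if every $T_i$ is, and each factor being a sphere, every $T_i$ is trace-class; hence so is $T$.

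Next I would compute the signature $sgn(T) = (n_0, n_+, n_-)$ under the hypothesis $N > 1$. For $n_+$: by~\eqref{eqn:prod spec1}, the multiset of positive eigenvalues of $T$ (counted with multiplicity) is the disjoint union over $i$ of the multisets of positive eigenvalues of the $T_i$; since each $T_i$ has signature $(1,\infty,\infty)$ and thus already contributes infinitely many positive eigenvalues, we get $n_+ = \infty$. Replacing ``positive'' by ``negative'' throughout gives $n_- = \infty$ by the same argument. For $n_0$ I would quote the computation carried out in the discussion leading to~\eqref{eqn:prod spec1}: taking the product orthonormal basis $\Phi = \phi_{i_1}^{j_1}\cdots\phi_{i_N}^{j_N}$ with all non-constant factors chosen in $1^\perp$, one has $T\Phi = 0$ as soon as at least two of the factors of $\Phi$ are non-constant, because after reordering the Fubini integrals the innermost integral $\int_{X_l}\phi_l^{j_l}\,d\mu_l$ vanishes; there are infinitely many such $\Phi$ and they are orthonormal, so $\ker T$ is infinite-dimensional and $n_0 = \infty$. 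Combining the three counts yields $sgn(T) = (\infty,\infty,\infty)$.

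I do not expect any real obstacle here beyond careful bookkeeping: the content is a routine consequence of the product structure once~\eqref{eqn:prod spec1} and the single-sphere data are in place. The one point worth stating explicitly is that the $\infty$ entries cannot ``collapse'' when the multisets are merged, but this is automatic since \emph{each} individual factor already contributes infinitely many positive and infinitely many negative eigenvalues. It is also worth flagging where $N > 1$ is used: only to force $\ker T$ to be infinite-dimensional (the first slot of the signature); for $N = 1$ one recovers instead the single-sphere signature $(1,\infty,\infty)$. The genuinely non-trivial ingredient, namely $sgn(T_i) = (1,\infty,\infty)$ for a sphere, is not reproved here and is taken from \cite{Stepanov2022}.
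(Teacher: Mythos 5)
Your proof is correct and follows essentially the same route as the paper: trace-class via the preceding proposition together with the single-sphere result of \cite[section~6.1]{Stepanov2022}, infinitely many positive and negative eigenvalues via the disjoint-union identity~\eqref{eqn:prod spec1}, and an infinite-dimensional kernel from the product basis elements with at least two non-constant factors. No gaps.
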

As $X$ is closed symmetric, the operator $T$ preserves all eigenspaces of $\Delta$. Let $E_{\lambda_i^{\alpha}}$ be an eigenspace for the Laplace operator of $(X_i,g_i)$ and denote $\pi_i:X\rightarrow X_i$ the canonical projection,
then $T$ preserves the space $\pi_i^*\left(E_{\lambda_i^{\alpha}}\right)$ and is self-adjoint on it with respect to the $L^2(X,\mu)$-norm. In fact, the operator $T$ is simply a constant scaling on $\pi_i^*\left(E_{\lambda_i^{\alpha}}\right)$. To show this, denote $G$ the isometry group of $g$, then for any $s\in G$ we have
\begin{align*}
	\mathcal{K}(s\cdot f)(x)& =\int_X K(x,y) f(s\cdot y) d\mu(y)\\
	& = \int_X K(s\cdot x,s\cdot y) f(s\cdot y) d\mu(y)\\
	& = \int_X K(s\cdot x,z) f(z) d\mu(z)= \mathcal{K}(f)(s\cdot x).
\end{align*}
Therefore, the natural action of $G$ commutes with the operator $\mathcal{K}$ (hence $T$). Let $G_i$ be the isometry group of $(X_i,g_i)$. Since each component $(X_i,g_i)$ is a sphere, the isometry group $G_i$ acts irreducibly on $E_{\lambda_i^{\alpha}}$  \cite[theorem~3.1]{Helgason1984}, and preserves the eigenspaces of $T$ when viewed as a subgroup of $G$. Thus, $T$ can have only 1-real eigenvalue on $\pi_i^*\left(E_{\lambda_i^{\alpha}}\right)$. In particular, the map $T$ has only 1 real eigenvalue on each Laplacian eigenspace of standard spheres.

\subsection{Projective spaces}
\label{sec: proj spaces}
\subsubsection{Signature of $T$ for projective spaces}
Here we consider the case $X:=\mathbb{RP}^n$, an $n$-dimensional real projective space equipped with its geodesic distance and Riemannian volume measure.  Since $\mathbb{RP}^n=\mathbb{S}^n/\mathbb{Z}_2$, we expect the MDS for projective spaces behaves similarly compared to spheres. On the other hand, we will see in this section how the global topology makes a difference in MDS maps on projective spaces compared to the MDS on spheres. We start by determining the signature of $T$ for projective spaces $\mathbb{RP}^n$.

The Laplacian eigenfunctions on $\mathbb{RP}^n$ are well-defined projections of spherical harmonics. Hence, they are projections of spherical harmonics of even degree. Clearly as for the spheres, the group $SO(n)$ acts irreducibly on each Laplacian eigenspace of $\mathbb{RP}^n$. Thus, each eigenspace of $T$ is a direct sum of eigenspace of $\Delta$ on $\mathbb{RP}^n$.

For $x,y\in\mathbb{S}^n$, the distance between the lines $[x]$ and $[y]$ on $\mathbb{RP}^n$ is $\arccos(\vert\langle x,y\rangle\vert)$. We know that the operators $T$ and $\mathcal{K}$ share all eigenvalues and eigenfunctions except for those corresponding to constant functions. To compute the spectrum of $\mathcal{K}$ for projective spaces, let $2k$ be even. Then by Funke-Hecke theorem \cite[p.~98]{Groemer1996}, the eigenvalue of $\mathcal{K}$ for $\mathbb{RP}^n$ corresponding to spherical harmonics of degree $2k$ is given by
\begin{align}
	\label{eqn:proj spec}
	\lambda_{2k}^n=\sigma_n \int_0^1\ \arccos^2(t) P_{2k}^n(t) (1-t^2)^{(n-2)/2} dt,
\end{align}
where $P_{2k}^n(t)$ is the Legendre polynomials for $\mathbb{S}^n$ of degree $2k$, and the numbers $\sigma_n= -\dfrac{\mathrm{vol}(\mathbb{S}^{n-1})}{\mathrm{vol}(\mathbb{S}^n)}$ are negative constants depending only on $n$. 
Inside the integral in $\eqref{eqn:proj spec}$, only the term
\begin{align}
	\label{eqn:Reccurence explicit}
	F_{2k}^n(t)=P_{2k}^n(t)(1-t^2)^{(n-2)/2}= R_{2k}^n \left( \dfrac{d}{dt} \right)^{2k} (1-t^2)^{2k+(n-2)/2}
\end{align}
is affected by the dimension $n$ and even degree $2k$. For $2k$ even, the Rodrigues constants $R_{2k}^n$ are given by~\cite[p.~22]{Muller}
\begin{align}
	R^n_{2k}=\dfrac{1}{4^k}\dfrac{\Gamma(n/2)}{\Gamma(n/2+2k)}.
\end{align}
These constants are all positive. Note the equations (\ref{eqn:proj spec},\ref{eqn:Reccurence explicit}) are still valid for the special case $n=1$, where $P_{2k}^1$ are given by Chebyshev polynomials of the first kind. 
For $n\geq 1 $ and $2k\geq 2$, a substitution $t=\cos\theta$ using $\dfrac{d}{d\theta}=-\sin\theta\cdot\dfrac{d}{dt}$ yields
\begin{align*}
	& F_{2k}^n(\cos\theta)\\
	=& R_{2k}^n \left(\dfrac{-1}{\sin\theta}\dfrac{d}{d\theta}\right)^{2k}  \left[(\sin\theta)^{4k+n-2}\right] \\
	=& (2k+n-2)R_{2k}^n \left(\dfrac{-1}{\sin\theta}\dfrac{d}{d\theta}\right)^{2k-2} \biggr[(2(2k-1)+n-2)(\sin\theta)^{2(2k-2)+n-2}\\
	& -(2(2k-1)+s+1) (\sin\theta)^{2(2k-2)+n}  \biggr]\\
	=& (4k+n-2)\left[(2(2k-1)+n-2)\dfrac{R^n_{2k} }{R^n_{2k-2}}F_{2k-2}^n(\cos\theta)\right.\\
	& \left. -(2(2k-1)+n-1)\dfrac{R_{2k}^n }{R^{n+2}_{2k-2}}F_{2k-2}^{n+2}(\cos\theta) \right]
\end{align*}
By linearity of the integration, we combine the equation above and \eqref{eqn:proj spec} to obtain
\begin{equation}
	\begin{aligned}
		\label{eqn: eigenvalue recur 1}
		\dfrac{1}{\sigma_n}\lambda_{2k}^n
		=  & (4k+n-2)\left((4k+n-4)\dfrac{R^n_{2k}}{\sigma_n R^n_{2k-2}}\lambda^n_{2k-2}\right.\\
		&\left. - (4k+n-3)\dfrac{R^n_{2k}}{\sigma_{n+2}R^{n+2}_{2k-2}}\lambda^{n+2}_{2k-2}\right)
	\end{aligned}
\end{equation}
Note for $n\geq 1$ and $2k\geq 2$, we always have $4k+n-4\geq 1>0$. From the signs of the coefficients in the equation above, we see the following: If $\{\lambda_{2k}^n\}_{k=1}^{\infty}$ is a sequence of alternating signs indexed by even numbers $2k$, the signs of numbers in the sequence  $\{\lambda_{2k}^{n+2}\}$ will also be alternating. For the basic case $n=1$, the Riemannian manifold $\mathbb{RP}^1$ is isomorphic to $\mathbb{S}^1$ by doubling the angles between lines. For $\mathbb{S}^1$, the signs of eigenvalues of $\mathcal{K}$ corresponding to the eigenfunctions $\cos(m\theta)$ are alternating, depending on whether $m$ is odd or even \cite[section~2]{azevedo2015eigenvalues}. Thus, the signs of the sequence $\{\lambda^1_{2k} \}$ indexed by even $2k$ are also alternating. Combined with the argument above, we obtain the following result.
\begin{proposition}
	For odd number $n$, the MDS associated operator $T$ on $\mathbb{RP}^n$ has both infinitely many positive and negative eigenvalues. 
\end{proposition}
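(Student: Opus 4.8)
The plan is to exploit the recurrence \eqref{eqn: eigenvalue recur 1} as a sign-propagation engine, moving from low to high odd dimension in steps of two, with the base case $n=1$ supplied by the known spectrum of $\mathcal{K}$ on $\mathbb{S}^1$. First I would make precise the \emph{alternation} claim: say a sequence $\{a_k\}_{k\ge 1}$ is \emph{alternating} if $(-1)^k a_k$ has a constant sign (equivalently $a_k a_{k+1} < 0$ for all $k$, once we also check no term vanishes). The base case: since $\mathbb{RP}^1 \cong \mathbb{S}^1$ under angle-doubling, the eigenvalues $\lambda^1_{2k}$ of $\mathcal{K}$ on $\mathbb{RP}^n$ correspond to the eigenvalues of $\mathcal{K}$ on $\mathbb{S}^1$ attached to $\cos(m\theta)$ with $m$ even, and by \cite{azevedo2015eigenvalues} these have alternating signs; so $\{\lambda^1_{2k}\}_k$ is alternating.

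The inductive step is where the recurrence does the work. Fix an odd $n\ge 1$ and suppose we have already shown that $\{\lambda^n_{2k}\}_k$ is alternating. Rearrange \eqref{eqn: eigenvalue recur 1} to isolate $\lambda^{n+2}_{2k-2}$ in terms of $\lambda^n_{2k}$ and $\lambda^n_{2k-2}$:
\begin{equation*}
(4k+n-3)\dfrac{R^n_{2k}}{\sigma_{n+2}R^{n+2}_{2k-2}}\,\lambda^{n+2}_{2k-2} = (4k+n-4)\dfrac{R^n_{2k}}{\sigma_n R^n_{2k-2}}\,\lambda^n_{2k-2} - \dfrac{1}{(4k+n-2)\sigma_n}\,\lambda^n_{2k}.
\end{equation*}
Here for $n\ge 1$, $k\ge 1$ all of $4k+n-2, 4k+n-3, 4k+n-4$ are positive (using $4k+n-4\ge 1$), the Rodrigues constants $R^n_{2k}, R^n_{2k-2}, R^{n+2}_{2k-2}$ are positive, and $\sigma_n, \sigma_{n+2}$ are negative. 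By the inductive hypothesis $\lambda^n_{2k-2}$ and $\lambda^n_{2k}$ have opposite signs, so the two terms on the right-hand side have the \emph{same} sign (a positive multiple of $\lambda^n_{2k-2}$ minus a negative-coefficient multiple of $\lambda^n_{2k}=-\mathrm{sgn}(\lambda^n_{2k-2})|\lambda^n_{2k}|$), hence their sum is nonzero with sign equal to $\mathrm{sgn}(\lambda^n_{2k-2})$. Dividing by the negative constant in front of $\lambda^{n+2}_{2k-2}$ flips the sign, giving $\mathrm{sgn}(\lambda^{n+2}_{2k-2}) = -\mathrm{sgn}(\lambda^n_{2k-2})$ for every $k\ge 1$; since $\mathrm{sgn}(\lambda^n_{2k-2})$ itself alternates in $k$, so does $\mathrm{sgn}(\lambda^{n+2}_{2(k-1)})$, i.e.\ $\{\lambda^{n+2}_{2j}\}_j$ is alternating. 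Thus by induction $\{\lambda^n_{2k}\}_k$ is alternating for every odd $n$, so it contains infinitely many positive and infinitely many negative terms; since these $\lambda^n_{2k}$ together with the zero eigenvalue coming from constants exhaust the nonzero spectrum of $\mathcal{K}$, and $T$ and $\mathcal{K}$ share all nonzero eigenvalues, $T$ on $\mathbb{RP}^n$ has infinitely many positive and infinitely many negative eigenvalues.

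The main obstacle I anticipate is the bookkeeping needed to guarantee that \emph{no} $\lambda^n_{2k}$ vanishes — the sign-flip argument only propagates a genuine alternation if every term is nonzero, and the recurrence alone shows $\lambda^{n+2}_{2k-2}\ne 0$ only \emph{given} $\lambda^n_{2k-2}, \lambda^n_{2k}$ nonzero of opposite sign; so I would fold the non-vanishing into the inductive statement and verify it at $n=1$ directly from the $\mathbb{S}^1$ computation, then check that the "same sign, hence nonzero sum" step above is airtight (it is, since a strictly positive quantity plus a strictly positive quantity cannot cancel). A secondary technical point is confirming the edge indices: the recurrence \eqref{eqn: eigenvalue recur 1} is derived for $2k\ge 2$, so the alternation statement governs the tail $k\ge 1$, which is all that is needed; one should also note in passing that the $SO(n)$-irreducibility of each Laplacian eigenspace of $\mathbb{RP}^n$ (already recorded above) ensures each $\lambda^n_{2k}$ is a genuine eigenvalue of $T$ of the stated multiplicity, so "infinitely many positive/negative $\lambda^n_{2k}$" really does translate into the claimed signature.
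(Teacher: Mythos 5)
Your proof follows essentially the same route as the paper: the base case $n=1$ from the known alternating spectrum of $\mathcal{K}$ on $\mathbb{S}^1$, and induction on the dimension in steps of two through the recurrence \eqref{eqn: eigenvalue recur 1}, the key point being that after solving for $\lambda^{n+2}_{2k-2}$ the two right-hand terms reinforce rather than cancel; you in fact supply more of the sign bookkeeping and the non-vanishing discussion than the paper's one-line "from the signs of the coefficients" remark. One small slip worth fixing: since $\sigma_n<0$, the coefficient $(4k+n-4)R^n_{2k}/(\sigma_n R^n_{2k-2})$ of $\lambda^n_{2k-2}$ in your rearranged identity is \emph{negative}, so both right-hand terms carry the sign $-\mathrm{sgn}(\lambda^n_{2k-2})$, and dividing by the negative left-hand coefficient gives $\mathrm{sgn}(\lambda^{n+2}_{2k-2})=+\mathrm{sgn}(\lambda^n_{2k-2})$ rather than the sign flip you state; this does not affect the conclusion (alternation in $k$ propagates either way, and the sum is still nonzero), but as written your parenthetical assigns the two terms opposite signs while simultaneously asserting they have the same sign, so the bookkeeping should be redone consistently.
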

We then turn to the case of even-dimensional projective spaces, starting from the basic case of $\mathbb{RP}^2$. We expect that the signature of $T$ for even-dimensional projective spaces is similar to the odd dimension cases. A numeric computation for the first $6$ eigenvalues (corresponding to even degree spherical harmonics up to $2k=10$). From the pictures below, we can see that the first 6 eigenvalues have alternating signs.\\
\includegraphics[scale=0.5]{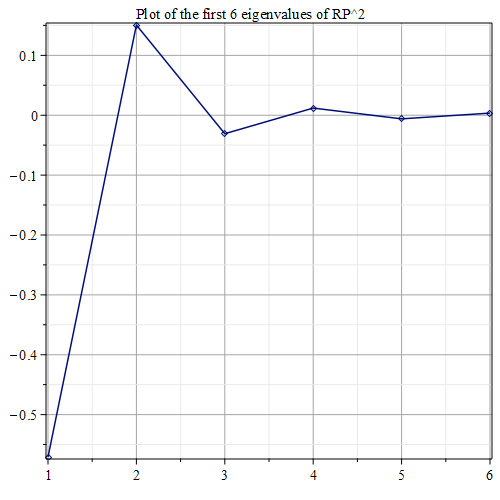}\\

Although computing the spectrum of $T$ for projective spaces can be difficult, we can easily obtain some information on $sgn(T)$ for $\mathbb{RP}^2$ using Mercer's theorem.
\begin{lemma}
	The operator $T$ on $\mathbb{RP}^2$ has infinitely many strictly positive and negative eigenvalues counting multiplicity.
\end{lemma}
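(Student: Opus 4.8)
The plan is to reduce the statement to the one–dimensional case. Recall that $\mathbb{RP}^1$ is isometric to a rescaling of $\mathbb S^1$ (``doubling of angles''), so the operator $\mathcal K$ of \eqref{eq_KTdef1} for $\mathbb{RP}^1$ has the same sign pattern of eigenvalues as that of $\mathbb S^1$; by the computation with the harmonics $\cos(m\theta)$ recalled above, this pattern is alternating, so $\mathcal K$ — and hence $T$ — of $\mathbb{RP}^1$ has infinitely many positive and infinitely many negative eigenvalues. Inside $\mathbb{RP}^2$ I would fix a projective line $L$, i.e.\ the image under the double cover $\mathbb S^2\to\mathbb{RP}^2$ of an equatorial great circle; since two distinct points of $\mathbb{RP}^2$ lie on a unique projective line, which moreover is a closed geodesic, every minimizing geodesic of $\mathbb{RP}^2$ joining two points of $L$ is an arc of $L$, and therefore the geodesic distance of $\mathbb{RP}^2$ restricted to $L\times L$ coincides with that of $L\cong\mathbb{RP}^1$. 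Let $\mathcal K$, $\mathcal K_L$ be the operators of \eqref{eq_KTdef1} for $\mathbb{RP}^2$ and for $L$. As both spaces are homogeneous, $\mathcal K$ (resp.\ $\mathcal K_L$) sends constants to constants and preserves their orthogonal complement, where it agrees with $T$ (resp.\ $T_L$); since the constants carry a negative eigenvalue of $\mathcal K$, the operator $T$ has infinitely many eigenvalues of a given sign iff $\mathcal K$ does, so it suffices to prove the assertion for $\mathcal K$.

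For the positive part I would argue by contradiction. Suppose $\mathcal K$ has only finitely many positive eigenvalues $\lambda_1,\dots,\lambda_p>0$, with (necessarily continuous) orthonormal eigenfunctions $\chi_1,\dots,\chi_p$. Then
\[
\mathcal A:=-\mathcal K+\sum_{j=1}^p\lambda_j\,\langle\,\cdot\,,\chi_j\rangle\,\chi_j
\]
is a self-adjoint positive semi-definite integral operator on $L^2(\mathbb{RP}^2,\mu)$ with continuous kernel $A(x,y)=\tfrac12 d^2(x,y)+\sum_j\lambda_j\chi_j(x)\chi_j(y)$. By Mercer's theorem $A(x,y)=\sum_i\alpha_i\,\phi_i(x)\phi_i(y)$ with $\alpha_i\ge 0$, continuous $\phi_i$, and uniform convergence on $\mathbb{RP}^2\times\mathbb{RP}^2$. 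Restricting the arguments to $L$, where $d^2$ becomes $d^2_{\mathbb{RP}^1}$ and the series still converges uniformly, one gets that the integral operator $\mathcal A_L$ on $L^2(L)$ with kernel $A|_{L\times L}$ is positive semi-definite, since its quadratic form equals $\sum_i\alpha_i\langle\phi_i|_L,f\rangle^2$. On the other hand $\mathcal A_L=-\mathcal K_L+\mathcal F$, where $\mathcal F$ is the positive semi-definite operator of rank $\le p$ with kernel $\sum_j\lambda_j\,\chi_j|_L(x)\chi_j|_L(y)$; hence, by a standard min--max argument, $-\mathcal K_L$ has at most $p$ negative eigenvalues, i.e.\ $\mathcal K_L$ has at most $p$ positive ones — contradicting the statement for $\mathbb{RP}^1$.

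The negative part is symmetric: if $\mathcal K$ had only finitely many negative eigenvalues $\nu_1,\dots,\nu_q<0$ (one of the associated continuous eigenfunctions being the constant), I would set $\mathcal B:=\mathcal K-\sum_i\nu_i\langle\,\cdot\,,\psi_i\rangle\psi_i$, which is positive semi-definite with continuous kernel $-\tfrac12 d^2(x,y)-\sum_i\nu_i\psi_i(x)\psi_i(y)$; Mercer's theorem together with restriction to $L$ again yields a positive semi-definite $\mathcal B_L=\mathcal K_L-\mathcal F'$ with $\mathcal F'$ of rank $\le q$ and negative semi-definite, whence $\mathcal K_L$ would have at most $q$ negative eigenvalues, again impossible.

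The step I expect to be the main obstacle is the passage from positive semi-definiteness of $\mathcal A$ on $L^2(\mathbb{RP}^2,\mu)$ to that of $\mathcal A_L$ on $L^2(L)$: since $L$ is $\mu$-negligible there is no literal compression of operators, and one has to use the uniformly convergent Mercer expansion as above (equivalently, weakly approximate the length measure on $L$ weighted by $f$ by absolutely continuous measures supported in shrinking tubular neighbourhoods of $L$, invoking continuity of the kernel). A secondary point to check carefully is that the geodesic distance of $\mathbb{RP}^2$ genuinely restricts to that of $L$ — i.e.\ that minimizing geodesics between points of $L$ never leave $L$ — which is exactly the statement that two distinct points of $\mathbb{RP}^2$ span a unique projective line and that this line is a closed geodesic.
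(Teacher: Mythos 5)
Your proof is correct, and it takes a genuinely different route from the paper's. Both arguments reduce matters to $\mathbb{RP}^1$ (where the alternating signs of the circle eigenvalues give infinitely many eigenvalues of each sign) and both argue by contradiction via Mercer's theorem, but the reduction mechanisms differ. The paper stays entirely analytic: assuming finitely many negative eigenvalues, it uses the uniform Mercer expansion to write $\arccos^2(t)$ as a uniformly convergent Legendre series, integrates this against the Chebyshev weight to express the $\mathbb{RP}^1$ eigenvalues $\lambda^1_{2j}$ as $\sum_k \lambda_{2k}(4k+1)\int_0^1 P_{2k}C_{2j}(1-t^2)^{-1/2}\,dt$, and exploits the non-negativity of the Legendre-to-Chebyshev connection coefficients to force $\lambda^1_{2j}\ge 0$ for large $j$ — contradicting the alternation. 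Your argument replaces this computation with geometry: you restrict the (continuous, PSD after subtracting the finite-rank part) kernel to a totally geodesic projective line $L\cong\mathbb{RP}^1$, observe that the ambient distance restricts to the intrinsic one, and conclude by a rank/min--max count that $\mathcal{K}_L$ would have only finitely many eigenvalues of the relevant sign. All the delicate points you flag are handled correctly: the passage from positivity on $L^2(\mathbb{RP}^2)$ to positivity on $L^2(L)$ does go through via the uniformly convergent Mercer expansion (equivalently, via pointwise positive-definiteness of continuous PSD kernels, which restricts trivially to subsets), and $d_{\mathbb{RP}^2}$ restricted to $L\times L$ is indeed $\min(\theta,\pi-\theta)$, the distance of a circle of circumference $\pi$. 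Your approach is arguably more conceptual and portable — it applies verbatim to any compact homogeneous space containing a totally geodesic isometric copy of a space whose operator is already known to have infinitely many eigenvalues of each sign — whereas the paper's version is tied to the explicit Funk--Hecke formula and special-function identities; in fact the positivity of the connection coefficients used in the paper is the analytic shadow of your restriction-to-a-great-circle argument.
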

\begin{proof}
	Since $\mathbb{RP}^2$ is homogeneous, we only need to prove this statement for $\mathcal{K}$.\\
	\\
	Suppose $\mathcal{K}$ has only finitely many strictly negative eigenvalues counting multiplicity. Let $\lambda_{2k}$ be the eigenvalue of $\mathcal{K}$ corresponding to the degree $2k$ spherical harmonics as before. Denote $\lbrace \phi_{2k}^m\rbrace$ with $-2k\leq m\leq 2k$ the standard $L_2$-orthonormal basis with respect to $\mu$ of Laplacian eigenspace $\mathcal{H}_{2k}$ of $\mathbb{RP}^2$ corresponding to degree $2k$ spherical harmonics.  The function
	\begin{align}
		N(x,y)=\sum_{\lambda_{2k}<0}\lambda_{2k} \sum_{m=-2k}^{2k}\phi^m_{2k}(x)\phi^m_{2k}(y)
	\end{align}
	is smooth and bounded, since the summation above is finite.
	We can decompose the integral kernel as $$K(x,y)=N(x,y)+H(x,y),$$ where $N(x,y)$ is a negative integral kernel and $H(x,y)$ is a positive integral kernel. Because the positive operator 
	$$H(f)(x)=\int_{\mathbb{RP}^2} f(y) d\mu(y)$$
	has a continuous bounded kernel, Mercer's theorem \cite[theorem~4.10]{Cucker2007} implies the following convergence with respect to summation of $\lambda_{2k} \phi_{2k}^m(x)\phi_{2k}^m(y)$ is absolute and uniform:
	\begin{align}
		H(x,y)= \sum_{\lambda_{2k}\geq 0} \sum_{m=-2k}^{2k}\lambda_{2k} \phi_{2k}^m(x)\phi_{2k}^m(y).
	\end{align}
	From the basics properties of spherical harmonics \cite[theorem~3.3.3]{Groemer1996}, we obtain
	\begin{align*}
		\sum_{m=-2k}^{2k} \phi_{2k}^m(x)\phi_{2k}^m(y)= &\mathrm{dim}(\mathcal{H}_k)P_{2k}(\cos (d(x,y)))\\
		= & (4k+1)P_{2k}(x\cdot y).
	\end{align*}
	Here we use $x,y$ to denote both points in $\mathbb{RP}^2$ and their lifts to $\mathbb{S}^2$. Since $P_{2k}$ is even, the value of $P_{2k}(x\cdot y)$ is always well-defined.
	\\
	\\
	Therefore, we obtain the following absolute and uniform convergence
	$$H(x,y)= \sum_{\lambda_{2k}\geq 0}\lambda_{2k}(4k+1)P_{2k}( x\cdot y)$$
	It follows that the following convergence of functions on $(\mathbb{RP}^2)^2$
	\begin{align}
		-\dfrac{1}{2}d^2(x,y)=-\dfrac{1}{2}\arccos^2(\vert x\cdot y\vert)=K(x,y)=\sum_{k=0}^{\infty} \lambda_{2k}(4k+1)P_{2k}(x\cdot y)
	\end{align}
	is absolute and uniform. 
	This implies for $t\in [0,1]$, the convergence
	\begin{align}
		\label{eqn:arcos con 1}
		\arccos^2(t)=-2\sum_{k=0}^{\infty}\lambda_{2k}(4k+1)P_{2k}(t)
	\end{align}
	is also absolute and uniform.\\
	\\
	On the other hand, we know from \eqref{eqn:proj spec}, the eigenvalues of $\mathcal{K}$ for $\mathbb{RP}^1$ are given by
	\begin{align}
		\lambda_{2j}^1=-\dfrac{1}{\pi}\int_0^1 \arccos^2(t) C_{2j}(t)\dfrac{1}{\sqrt{1-t^2}} dt,
	\end{align}
	where $C_{2j}$ are Chebyshev polynomials of the first kind of degree $2j$.
	Since the convergence in \eqref{eqn:arcos con 1} is absolute and uniform, dominated convergence theorem implies
	\begin{align}
		\label{eqn: eigenvalue s1}
		\lambda_{2j}^1=\dfrac{2}{\pi}\sum_{k=0}^{\infty}\lambda_{2k}(4k+1)\int_0^1 P_{2k}(t)C_{2j}(t)\dfrac{1}{\sqrt{1-t^2}} dt
	\end{align}
	According to \cite[p.~96]{Ismail2005}, the even degree Legendre polynomials can be expanded by Chebyshev polynomials as
	\begin{align*}
		P_{2k}(t)=\left(\dfrac{\Gamma(1/2+k)}{\Gamma(1/2)\Gamma(k+1) }\right)^2C_0+\dfrac{2}{\Gamma^2(1/2)}\sum_{i=1}^k\left( \dfrac{\Gamma(k-i+1/2)\Gamma(k+i+1/2)}{\Gamma(k-i+1)\Gamma(k+i+1)}\right)C_{2i}(t)
	\end{align*}
	The functions $C_{2j}(t)$ are even, and we have
	\begin{align*}
		\int_0^1 C_{2i}(t)C_{2j}(t)\dfrac{1}{\sqrt{1-t^2}}dt=\dfrac{\pi}{4}\left(\delta^i_j+\delta^0_i\delta^0_j\right)
	\end{align*}
	Therefore, we obtain
	\begin{align}
		\int_0^1 P_{2k}C_{2j}\dfrac{1}{\sqrt{1-t^2}} dt =0,\ 0\leq k<j\\
		\int_0^1 P_{2k}C_{2j}\dfrac{1}{\sqrt{1-t^2}} dt>0,\ k\geq j
	\end{align}
	These equalities and~\eqref{eqn: eigenvalue s1} imply $\lambda_{2j}^1$ is always non-negative for sufficiently large $j$. This leads to a contradiction. Hence, the operator $\mathcal{K}$ (hence $T$) for $\mathbb{RP}^2$ has to admit infinitely many negative eigenvalues counting multiplicity.
	
	Similarly, we can prove that $T$ on $\mathbb{RP}^2$ also has infinitely many positive eigenvalues counting multiplicity. This completes the proof.
\end{proof}
Starting from $\mathbb{RP}^2$, an induction argument on the dimension of the projective space using \eqref{eqn: eigenvalue recur 1} shows $T$ for every even dimension projective space has to admit both infinitely many positive and negative eigenvalues counting multiplicity. Combined with the result of odd-dimensional projective spaces, we obtain the following.
\begin{theorem}
	For any projective space $\mathbb{RP}^n$, the operator $T$ always has both infinitely many positive and negative eigenvalues counting multiplicity.
\end{theorem}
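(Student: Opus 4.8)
The plan is to combine the two cases that have already been treated separately in the excerpt into a single statement, so that essentially no new work is required beyond organizing what precedes. For odd $n$, the claim is exactly the Proposition proved via the recurrence \eqref{eqn: eigenvalue recur 1} together with the base case $\mathbb{RP}^1\cong\mathbb{S}^1$, whose eigenvalue signs alternate by \cite{azevedo2015eigenvalues}. So the only thing to assemble is the even-dimensional case, and there the strategy is a two-step induction on $n$: first establish the statement for $\mathbb{RP}^2$ (done in the Lemma via Mercer's theorem), and then propagate from dimension $n$ to dimension $n+2$ using \eqref{eqn: eigenvalue recur 1}.

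Here is how I would carry out the induction step. Fix an even $n\ge 2$ and assume $\{\lambda_{2k}^n\}_{k\ge 1}$ has infinitely many terms of each sign. I want to deduce the same for $\{\lambda_{2k}^{n+2}\}_{k\ge 1}$. Rearranging \eqref{eqn: eigenvalue recur 1} to isolate $\lambda_{2k-2}^{n+2}$ gives
\begin{equation*}
(4k+n-3)\frac{R^n_{2k}}{\sigma_{n+2}R^{n+2}_{2k-2}}\lambda^{n+2}_{2k-2}
= (4k+n-4)\frac{R^n_{2k}}{\sigma_n R^n_{2k-2}}\lambda^n_{2k-2} - \frac{1}{\sigma_n(4k+n-2)}\lambda_{2k}^n .
\end{equation*}
Since all the Rodrigues constants $R^\bullet_\bullet$ are strictly positive, the constants $\sigma_\bullet$ are strictly negative, and for $n\ge 2$, $k\ge 1$ the factors $4k+n-4$, $4k+n-3$, $4k+n-2$ are all strictly positive, the sign structure is completely determined: up to an overall positive factor on the left, $\lambda^{n+2}_{2k-2}$ has the sign of $-\bigl(\text{positive})\lambda^n_{2k-2} + (\text{positive})\lambda_{2k}^n$ after absorbing the negative $\sigma_n$'s. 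The key point is that if the sequence $\{\lambda_{2k}^n\}$ has \emph{alternating} signs (which is what the excerpt actually establishes in the cited cases, a property stronger than merely having infinitely many of each sign), then $\lambda^n_{2k-2}$ and $\lambda^n_{2k}$ have opposite signs, so the two terms on the right reinforce rather than cancel, and $\{\lambda^{n+2}_{2k}\}$ is forced to alternate as well. So I would state the induction hypothesis as ``the signs alternate'' rather than ``infinitely many of each sign,'' which makes the step clean; this is exactly the reading of \eqref{eqn: eigenvalue recur 1} indicated in the text just before the odd-dimensional Proposition.

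Finally I would note that $\mathcal{K}$ and $T$ share all nonzero eigenvalues (they differ only on constants, which lie in $\ker T$), and that since $\mathbb{RP}^n$ is homogeneous the operator $T$ restricted to each Laplace eigenspace is a scalar, so ``infinitely many positive eigenvalues counting multiplicity'' is equivalent to ``$\lambda_{2k}^n>0$ for infinitely many $k$,'' and likewise for negative; thus the alternating-sign conclusion immediately yields the theorem. The main obstacle, such as it is, is purely bookkeeping: making sure that the positivity/negativity of every coefficient in the rearranged recurrence holds in the full range $n\ge 2$, $2k\ge 2$ (the borderline being $4k+n-4\ge 1$, which the text flags), and that the base cases $\mathbb{RP}^1$ and $\mathbb{RP}^2$ genuinely give \emph{strictly} alternating signs and not just infinitely many sign changes — for $\mathbb{RP}^2$ the Mercer argument in the Lemma gives infinitely many of each sign, so to run the induction I would either strengthen that Lemma to alternation or, alternatively, reformulate the induction to track only ``infinitely many sign changes,'' observing that the recurrence still transfers a sign change at index $2k-2$ in dimension $n$ to one in dimension $n+2$ because the right-hand side cannot vanish when the two contributing terms have the same sign.
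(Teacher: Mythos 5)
Your proposal is correct and follows essentially the same route as the paper: the odd case is exactly the Proposition (alternating signs propagated up from $\mathbb{RP}^1$ via the recurrence \eqref{eqn: eigenvalue recur 1}), and the even case starts from the Mercer-theorem Lemma for $\mathbb{RP}^2$ and inducts on the dimension using the same recurrence. Your observation that the $\mathbb{RP}^2$ base case only yields infinitely many eigenvalues of each sign rather than strict alternation, together with your fix of running the induction on sign changes (each $+$-to-$-$ transition at consecutive indices in dimension $n$ forcing a positive eigenvalue in dimension $n+2$, and symmetrically), is a genuine tightening of a step the paper leaves implicit.
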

\subsubsection{Spectral asymptotics of $T$  odd-dimensional projective spaces}
Now we estimate the norm of the eigenvalues of $T$ for odd-dimensional projective spaces. This will show how the MDS maps of $\mathbb{RP}^n$ differ from the MDS maps of $\mathbb{S}^n$ for odd $n$. To estimate the asymptotics of the eigenvalues $\lbrace \lambda^n_{2k}\rbrace$, we need the following lemma.
\begin{lemma}
	For odd-dimensional projective space $\mathbb{RP}^n$, we have 
\[\lambda^n_{2k}=\Theta(k^{-(n+3)/2}). 
\]
\end{lemma}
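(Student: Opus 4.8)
The plan is to extract the asymptotics of $\lambda^n_{2k}$ directly from the integral representation \eqref{eqn:proj spec},
\[
\lambda_{2k}^n=\sigma_n \int_0^1 \arccos^2(t)\,P_{2k}^n(t)(1-t^2)^{(n-2)/2}\,dt,
\]
by understanding where the mass of the integrand concentrates. Since the (normalized) Legendre polynomials $P_{2k}^n$ oscillate rapidly in the bulk of $[0,1]$ and the weight $\arccos^2(t)(1-t^2)^{(n-2)/2}$ is smooth there, the only obstruction to fast decay is the lack of smoothness of $\arccos^2(t)$ at the endpoint $t=1$: near $t=1$ one has $\arccos^2(t)=2(1-t)+O((1-t)^2)$, so the function is $C^0$ but not $C^1$ up to the corner (its expansion in powers of $\sqrt{1-t}$ is actually analytic, but as a function of $t$ it has a branch-type singularity in the derivatives). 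The endpoint $t=0$ is interior to the line-distance picture and contributes only rapidly decaying terms because $\arccos^2$ is smooth there. Thus the leading behaviour of $\lambda^n_{2k}$ should be governed by the endpoint contribution at $t=1$, and the exponent $-(n+3)/2$ is exactly what one expects from a weight $(1-t)^{(n-2)/2}$ against an endpoint singularity of the type $(1-t)^1$ for Jacobi-type polynomials of degree $2k$.

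Concretely, I would proceed as follows. First, rewrite the integral using the Rodrigues-type formula \eqref{eqn:Reccurence explicit}, $F_{2k}^n(t)=R_{2k}^n\,(d/dt)^{2k}(1-t^2)^{2k+(n-2)/2}$, and integrate by parts $2k$ times, throwing all derivatives onto $\arccos^2(t)$; the boundary terms at $t=1$ vanish because $(1-t^2)^{2k+(n-2)/2}$ and enough of its derivatives vanish there (as $2k+(n-2)/2>0$), and at $t=0$ one gets boundary contributions involving derivatives of $\arccos^2$ at $0$ against $(d/dt)^j(1-t^2)^{2k+(n-2)/2}|_0$, which I would show are either zero by parity or exponentially smaller than $k^{-(n+3)/2}$. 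This reduces $\lambda^n_{2k}$, up to negligible error and the explicit constant $\sigma_n R_{2k}^n$, to $\int_0^1 (\arccos^2)^{(2k)}(t)\,(1-t^2)^{2k+(n-2)/2}\,dt$. Second, I would substitute $t=\cos\theta$ (or $t=1-s$) and isolate the neighbourhood of $t=1$: there $(\arccos^2)^{(2k)}(t)$ behaves like $c_k (1-t)^{1-2k}$ times a smooth correction, with $c_k$ a ratio of Gamma functions of order $\Gamma(2k-3/2)/\Gamma(1/2)$ up to signs, coming from differentiating $2(1-t)^{1}\cdot(\text{analytic in }\sqrt{1-t})$ term by term. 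Third, I would evaluate the resulting one-sided integral near $t=1$, which is a Beta-type integral $\int_0^\delta s^{1-2k}\cdot s^{2k+(n-2)/2}\,ds$-type object, i.e. essentially $\int_0^\delta s^{n/2}\,ds$ but weighted by the non-polynomial part; more carefully, pairing $(d/dt)^{2k}(1-t^2)^{2k+(n-2)/2}$ with $\arccos^2$ gives a $_2F_1$ or Beta function $B(2k+n/2,\,\cdot)$ whose Stirling asymptotics in $k$ yield the power $k^{-(n+3)/2}$. Combining with $R_{2k}^n=\tfrac{1}{4^k}\Gamma(n/2)/\Gamma(n/2+2k)\sim c\,k^{-n/2}4^{-2k}$ and the growth $c_k\sim 4^{2k}k^{?}$ of the differentiated singular part, all exponential factors cancel and one is left with a polynomial power in $k$; bookkeeping the exponents carefully must produce $-(n+3)/2$.

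An alternative, possibly cleaner route is to avoid Rodrigues entirely and use the known uniform/endpoint asymptotics of Jacobi polynomials (Darboux-type or Mehler–Heine-type expansions): in the bulk $[\epsilon,1-\epsilon]$ the oscillatory integral against the smooth weight times $\arccos^2$ is $O(k^{-M})$ for every $M$ by repeated integration by parts (stationary phase has no stationary points since the weight is monotone-ish and $\arccos^2$ is smooth there), while near $t=1$ one inserts the Bessel-function (Mehler–Heine) approximation $P_{2k}^n(\cos(\theta)) \approx (\text{const})\,\theta^{-\alpha} J_\alpha(2k\theta)$ with $\alpha=(n-2)/2$, changes variables to $u=2k\theta$, and computes $\int_0^\infty \arccos^2(\cos(u/2k))\,J_\alpha(u)\,u^{\alpha+1}\,(2k)^{-(\text{power})}\,du$; since $\arccos^2(\cos\theta)=\theta^2$ exactly, this becomes $(2k)^{-(\text{power})}\int_0^\infty (u/2k)^2 J_\alpha(u) u^{\alpha+1}du\cdot(\text{Jacobian})$, and the resulting Bessel moment $\int_0^\infty u^{\alpha+3} J_\alpha(u)\,du$ (regularized) is a finite nonzero constant, so the $k$-dependence is entirely in the explicit powers of $2k$, which one tracks to get $k^{-(n+3)/2}$. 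I would likely present the Mehler–Heine route as the main argument because $\arccos^2(\cos\theta)=\theta^2$ makes the endpoint integral completely explicit.

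The main obstacle I anticipate is twofold: (i) making the separation of the bulk and endpoint contributions rigorous, i.e. proving the bulk piece is $o(k^{-(n+3)/2})$ — this needs a clean quantitative oscillatory-integral/integration-by-parts bound that handles the mild non-smoothness of $\arccos^2$ away from $t=1$ (it is genuinely smooth there, so this should work, but one must keep constants uniform in $k$); and (ii) controlling the error in the Mehler–Heine (or Darboux) approximation of $P_{2k}^n$ uniformly on a shrinking neighbourhood $\theta\in[0,C/k]$ and patching it to the bulk estimate on $[C/k,\epsilon]$ — the transition region is where one usually has to be careful, and one needs the error term in the Bessel approximation to be summable after the change of variables so that it contributes a lower-order power of $k$. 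Establishing the \emph{lower} bound $\lambda^n_{2k}=\Omega(k^{-(n+3)/2})$, i.e. that the leading Bessel moment constant does not vanish, is the final point to check; since $\int_0^\infty u^{\alpha+3}J_\alpha(u)\,du$ (understood as an Abel/analytic continuation) equals an explicit nonzero ratio of Gamma functions, this should follow, but it must be stated carefully because the naive integral diverges and one is really reading off a coefficient in an asymptotic expansion rather than evaluating a convergent integral.
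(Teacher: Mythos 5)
Your proposal rests on a misidentification of where the singularity of the kernel sits, and this makes both of your proposed computations fail at the decisive step. The function $\arccos^2(t)$ is in fact \emph{analytic} at $t=1$: writing $s=1-t$ one has $\arccos(1-s)=\sqrt{2s}\,h(s)$ with $h$ analytic and $h(0)=1$, so $\arccos^2(1-s)=2s\,h(s)^2$ is analytic in $s$ — squaring removes the branch point, and there is no ``corner'' at $t=1$. Consequently the endpoint $t=1$ contributes only rapidly decaying terms, and your Mehler--Heine computation there detects exactly this: the regularized Bessel moment you need to be nonzero is
\[
\int_0^\infty u^{\alpha+3}J_\alpha(u)\,du \;=\; \frac{2^{\alpha+3}\,\Gamma(\alpha+2)}{\Gamma(-1)} \;=\;0,
\]
so the leading constant in your main route vanishes and the lower bound $\Omega(k^{-(n+3)/2})$ cannot be extracted this way. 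The actual source of the power law is the point $t=0$, which you explicitly dismiss: the integral \eqref{eqn:proj spec} runs over the half-interval $[0,1]$ only, equivalently the relevant kernel on $[-1,1]$ is the even extension $\arccos^2(|t|)$, which has a genuine corner at $t=0$ because $\frac{d}{dt}\arccos^2(t)\big|_{t=0^+}=-\pi\neq 0$ (geometrically, $x\cdot y=0$ is the cut locus of $\mathbb{RP}^n$, where $d^2$ is non-smooth). In your Rodrigues-plus-integration-by-parts route the boundary terms at $t=0$ are therefore not ``zero by parity or exponentially small'': the odd-order derivatives of $\arccos^2$ at $0$ pair with the non-vanishing even-order derivatives of $(1-t^2)^{2k+(n-2)/2}$ at $0$, and these terms \emph{are} the answer. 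You can see this already in the base case $n=1$: substituting $t=\cos\theta$ gives $\lambda^1_{2j}=-\tfrac1\pi\int_0^{\pi/2}\theta^2\cos(2j\theta)\,d\theta=(-1)^{j+1}/(4j^2)$, where the entire leading term comes from the boundary at $\theta=\pi/2$ (i.e.\ $t=0$), while $\theta=0$ (i.e.\ $t=1$) contributes nothing.

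A repaired local analysis concentrated at $t=0$ could plausibly be made to work, but it would be a different argument from the one you outline. The paper sidesteps endpoint asymptotics entirely: it proves the base case $n=1$ by the explicit integration by parts above, and then inducts on odd $n$ in steps of two using the recurrence \eqref{eqn: eigenvalue recur 1}, which expresses $\lambda^{n+2}_{2k-2}$ through $\lambda^n_{2k}$ and $\lambda^n_{2k-2}$ with coefficients of known size $\Theta(k^{-2})\cdot\Theta(k)$ and $\Theta(1)\cdot\Theta(k^{-1})$; the alternating-sign property of $\{\lambda^n_{2k}\}_k$ is what rules out cancellation between the two terms and yields the matching lower bound. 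If you want to pursue your approach, you must redo the stationary/endpoint analysis at $t=0$ (using the oscillatory behaviour of $P^n_{2k}$ in the bulk against the corner of $\arccos^2(|t|)$ there) and verify that the resulting constant is nonzero; as written, your argument proves at best an upper bound of the wrong order and cannot establish the $\Theta$ statement.
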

\begin{proof}
	We prove this statement by induction on $n$. For the base case $n=1$, a simple integration by parts shows
	$$\lambda^1_{2k}=\Theta(k^{-2}).$$
	Suppose $\lambda^n_{2k}=\Theta(k^{-(n+3)/2})$ holds. All coefficients for the eigenvalues in~\eqref{eqn: eigenvalue recur 1} are non-zero. Solving this equation for $\lambda^n_{2k-2}$ and we get
	\begin{align*}
		\vert\lambda^{n+2}_{2k-2}\vert &\leq C_1(n,k)\left(\dfrac{R^{n+2}_{2k-2}}{R^n_{2k}} \right)\vert\lambda^n_{2k}\vert+C_2(n,k)\left( \dfrac{R^{n+2}_{2k-2}}{R^n_{2k-2}} \right)\vert \lambda^n_{2k-2}\vert \\
		\vert\lambda^{n+2}_{2k-2}\vert & C_2(n,k)\geq  C_2(n,k)\left( \dfrac{R^{n+2}_{2k-2}}{R^n_{2k-2}} \right)\vert \lambda^n_{2k-2}\vert, 
	\end{align*}
	where the second inequality comes from the fact that the sequence $\{\lambda^n_{2k}\}$ indexed by $2k$ has alternating signs. Here $C_1(n,k)$ and $C_2(n,k)$ are positive functions such that $C_1(n,k)=\Theta(k^{-2})$ and $C_2(n,k)=\Theta(1)$ when $n$ is fixed. Moreover, from the formula of Rodrigues constants, we get
	\begin{align}
		0<\dfrac{R^{n+2}_{2k-2}}{R^n_{2k}}=\Theta(k)\ \mathrm{and}, 
		0< \dfrac{R^{n+2}_{2k-2}}{R^n_{2k-2}}=\Theta(k^{-1}).
	\end{align}
	Then these inequalities together imply $\lambda^{n+2}_{2k}=\Theta(k^{-1-(n+3)/2})$.
\end{proof}

 Recall that by Lemma \ref{lemma:mds dist inv}, for any $x,y\in X$, the value of $\Vert\mathcal{M}(x)-\mathcal{M}(y)\Vert_2$ is independent of the choice of the eigenfunctions of $\mathcal{K}$ defining $\mathcal{M}$. 
 We are able to state the following result regarding the MDS maps for odd-dimensional projective spaces.

	\begin{theorem}
		For every odd $n>1$, the operator $T$ for $\mathbb{RP}^n$ is not trace-class. Furthermore, 
		given any $x\in\mathbb{RP}^n$, there exists a positive measure set $U_x$ such that
		\begin{equation}\label{eq-MDSdistinf1}
			\begin{aligned}
			\Vert \mathcal{M}(x)-\mathcal{M}(y)\Vert_2=+\infty,\quad \mbox{for all } y\in U_x,
		\end{aligned}
		\end{equation}		and the
		distance reconstruction formula~\eqref{eq_aeinj11} does not hold $\mu\otimes\mu$ almost everywhere. 
	\end{theorem}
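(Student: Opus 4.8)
The plan is to leverage the spectral asymptotics $\lambda^n_{2k}=\Theta(k^{-(n+3)/2})$ from the previous lemma together with the known dimensions of the Laplacian eigenspaces on $\mathbb{RP}^n$. First I would recall that the eigenspace $\mathcal{H}_{2k}$ of $\Delta$ on $\mathbb{RP}^n$ corresponding to spherical harmonics of degree $2k$ has dimension $\dim\mathcal{H}_{2k}=\Theta(k^{n-1})$; since $T$ acts as the scalar $\lambda^n_{2k}$ on each such eigenspace (by the irreducibility argument already established for $\mathbb{RP}^n$), the total contribution of the level $2k$ to the trace is $\dim\mathcal{H}_{2k}\cdot|\lambda^n_{2k}|=\Theta(k^{n-1}\cdot k^{-(n+3)/2})=\Theta(k^{(n-5)/2})$. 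Summing over $k$, the series $\sum_k k^{(n-5)/2}$ diverges precisely when $(n-5)/2\ge -1$, i.e. $n\ge 3$; thus for every odd $n>1$ the operator $T$ fails to be trace-class. (For $n=1$ the sum converges, consistent with $\mathbb{RP}^1\cong\mathbb{S}^1$ being trace-class.)

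\textbf{Failure of the reconstruction formula.} Next I would argue that~\eqref{eq_aeinj11} cannot hold $\mu\otimes\mu$-a.e. Suppose toward a contradiction that it does. Integrating both sides over $(x,y)$ against $\mu\otimes\mu$ and using that the $\phi_i$ form an orthonormal system with $\int_X\phi_i\,d\mu=0$ (all non-constant eigenfunctions lie in $1^\perp$), the right-hand side integrates to $\int\!\!\int d^2(x,y)\,d\mu(x)d\mu(y)<\infty$, while the left-hand side integrates, by monotone convergence and $\int(\phi_i(x)-\phi_i(y))^2\,d\mu\otimes\mu=2$, to $2\sum_i\lambda_i$ — wait, this only controls $\sum_i\lambda_i$, not $\sum_i|\lambda_i|$, so a direct integration is not quite enough. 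Instead I would use the pointwise identity more carefully: the partial sums $\sum_{i=1}^N\lambda_i(\phi_i(x)-\phi_i(y))^2$ need not converge since $T$ is not trace-class, and one shows that on a positive-measure set the partial sums of the \emph{positive} part diverge. Concretely, group the terms by Laplacian level: the contribution of level $2k$ to $\sum_i\lambda_i(\phi_i(x)-\phi_i(y))^2$ equals $\lambda^n_{2k}$ times $\sum_m(\phi^m_{2k}(x)-\phi^m_{2k}(y))^2 = \lambda^n_{2k}\bigl(2\dim\mathcal{H}_{2k} - 2\dim\mathcal{H}_{2k}\,P_{2k}(x\cdot y)\bigr) = 2\lambda^n_{2k}\dim\mathcal{H}_{2k}\bigl(1-P_{2k}(x\cdot y)\bigr)$ using the addition formula for zonal harmonics. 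For the positive-eigenvalue levels (an infinite set, by the Theorem above), this term is $\Theta(k^{(n-5)/2})\bigl(1-P_{2k}(x\cdot y)\bigr)$; on the set where $x\cdot y$ is bounded away from $\pm1$, the Legendre polynomial satisfies $|P_{2k}(x\cdot y)|=O(k^{-1/2})$, so $1-P_{2k}(x\cdot y)$ stays bounded below by a positive constant, and the sum over the positive levels diverges. Hence the MDS series $\sum_j\lambda_j^+(\phi_j^+(x)-\phi_j^+(y))^2$ diverges for all $(x,y)$ with $|x\cdot y|$ bounded away from $1$ — a positive-measure set — which gives both~\eqref{eq-MDSdistinf1} (since $\|\mathcal{M}(x)-\mathcal{M}(y)\|_2^2$ equals exactly this series, by~\eqref{eq:MDS1}) and the failure of~\eqref{eq_aeinj11}.

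\textbf{Constructing $U_x$.} Finally, for fixed $x$ I would take $U_x:=\{y\in\mathbb{RP}^n : |x\cdot y|\le 1-\varepsilon\}$ for a small fixed $\varepsilon>0$; this is the complement of two geodesic balls (around $x$ and its ``antipode'', which coincide on $\mathbb{RP}^n$) of radius $\to0$ as $\varepsilon\to0$, hence has positive measure, and by Lemma~\ref{lemma:mds dist inv} the divergence of $\|\mathcal{M}(x)-\mathcal{M}(y)\|_2$ is independent of the choice of eigenfunctions. The main obstacle is the second step: making rigorous that non-trace-class-ness of $T$ forces \emph{pointwise} divergence of the MDS sum on a large set. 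The key technical input is the uniform bound $|P_{2k}(t)|\le C(1-t^2)^{-1/4}k^{-1/2}$ (a standard asymptotic for Legendre polynomials / the Mehler–Heine type estimate) on compact subsets of $(-1,1)$, which ensures the factor $1-P_{2k}(x\cdot y)$ does not conspire to cancel the divergence coming from $\sum_k\dim\mathcal{H}_{2k}\,|\lambda^n_{2k}|=\infty$; I expect the careful handling of this estimate, and the separation of the positive- and negative-eigenvalue levels so that no cancellation occurs in the relevant partial sums, to be where the real work lies.
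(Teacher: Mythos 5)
Your proposal is correct and follows the paper's skeleton for the first and second parts: the non-trace-class claim is obtained exactly as in the paper, from $|\lambda^n_{2k}|\dim(\mathcal{H}^n_{2k})=\Theta(k^{(n-5)/2})$, and the divergence of $\Vert\mathcal{M}(x)-\mathcal{M}(y)\Vert_2$ rests on the same addition-formula identity, namely \eqref{eqn:mds dist perp}, which reduces everything to $\sum\lambda_{2k}\dim(\mathcal{H}^n_{2k})\bigl(1-P^n_{2k}(x\cdot y)\bigr)$ over the positive levels (an infinite arithmetic progression of degrees, by the alternating-sign result, so the $\Theta$-asymptotics still force divergence along it).

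Where you genuinely diverge from the paper is in producing the set $U_x$. The paper only verifies divergence at the single value $x\cdot y=0$, bounding $|P^n_{4k+2}(0)|$ away from $1$ via the three-term recurrence, and then gets a positive-measure set indirectly: if $x_0\cdot x_1=0$ then $\Vert\mathcal{M}(x_0)-\mathcal{M}(x_1)\Vert_2=+\infty$, so by the triangle inequality every $y$ lies in $A_0\cup A_1$ with $A_i=\{y:\Vert\mathcal{M}(y)-\mathcal{M}(x_i)\Vert_2=+\infty\}$, one of which has positive measure, and homogeneity transports this to every $x$. You instead prove divergence directly on the explicit set $\{y:|x\cdot y|\le 1-\varepsilon\}$ using the uniform decay of Legendre/Gegenbauer polynomials on compact subsets of $(-1,1)$. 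Your route needs an extra analytic input (the decay estimate, which for general $n$ is $|P^n_k(\cos\theta)|\le c_n(k\sin\theta)^{-(n-1)/2}$ rather than the $n=2$ form you quote, but this only helps), and in exchange it yields a concrete, nearly full-measure $U_x$ and shows divergence for essentially all pairs rather than for a set whose existence is inferred by a soft argument; the paper's route is more elementary, using only the recurrence at $t=0$ plus the isometry-group invariance from Lemma~\ref{lemma:mds dist inv}. Both arguments conclude the failure of \eqref{eq_aeinj11} in the same slightly informal way, from the divergence of the positive part of the series on a positive-measure set of pairs, so your treatment of that final step is on par with the paper's.
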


\begin{proof}
According to \cite[theorem~3.1.4]{Groemer1996}, the dimension of the space of degree $2k$ spherical harmonics on $\mathbb{RP}^n$ is given by
\begin{align}
	dim(\mathcal{H}^n_{2k})=\dfrac{4k+n-1}{2k+n-1}\begin{pmatrix}
		2k+n-1\\
		n-1
	\end{pmatrix}.
\end{align}
Using the relation of binomial coefficients, we easily deduce that $dim(\mathcal{H}^n_{2k})$ is a polynomial in $k$ of degree $n-1$.
Then for all odd $n$, we have
\begin{align}
	\label{eqn:eigen k trace}
	\vert\lambda^n_{2k}\vert dim(\mathcal{H}^n_{2k})=\Theta(k^{(n-5)/2})
\end{align}
Therefore, for all odd $n$ with $n>1$, the operator $T$ on $\mathbb{RP}^n$ is not a trace-class operator.

	First we show $\Vert \mathcal{M}(x)-\mathcal{M}(y)\Vert_2=+\infty$ when $x\cdot y=0$. Let $\lbrace \phi^i_{2k}\rbrace$ with $i\in I^n_{2k}$ be an orthonormal basis with respect to $\mu$ of $\mathcal{H}^n_{2k}$. For $x,y\in\mathbb{RP}^n$, we have
\begin{equation}
	\begin{aligned}
		\label{eqn:mds dist perp}
		\Vert \mathcal{M}(x)-\mathcal{M}(y)\Vert^2_2 = & \sum_{\lambda_{2k}>0} \lambda_{2k}\sum_{i\in I^n_{2k}}(\phi^i_{2k}(x)-\phi^i_{2k}(y))^2\\
		= &\sum_{k=0}^{\infty}\lambda_{4k+2}\sum_{i\in I^n_{4k+2}}(\phi^i_{4k+2}(x)-\phi^i_{4k+2}(y))^2\\
		= &  \sum_{k=0}^{\infty}\lambda_{4k+2}\cdot  dim(\mathcal{H}^n_{4k+2})\cdot 2(1-P^n_{4k+2}(x\cdot y))
	\end{aligned}
\end{equation}
The Legendre polynomials have the following recurrence relation (see  \cite[proposition~3.3.11]{Groemer1996})
\begin{align}
	(k+n-1)P^n_{k+1}(t)-(2k+n-1)tP^n_k(t)+kP^n_{k-1}(t)=0.
\end{align}
Therefore, for fixed $n$ we can see $\{\vert P^n_{4k+2}(0)\vert\}$ for $k\geq 0$ is a decreasing sequence uniformly bounded away from $1$. Since $x\cdot y=0$, for any $n>1$, there exists some $\delta_n>0$ such that
\begin{align*}
	\Vert \mathcal{M}(x)-\mathcal{M}(y)\Vert^2_2\geq 2\delta_n \sum_{k=0}^{\infty}\lambda_{4k+2}\cdot dim(\mathcal{H}^n_{4k+2}).
\end{align*}
Using \eqref{eqn:eigen k trace}, we can see $\Vert \mathcal{M}(x)-\mathcal{M}(y)\Vert^2_2=+\infty$ for odd $n>1$.

From \eqref{eqn:mds dist perp}, we see that $\Vert \mathcal{M}(x)-\mathcal{M}(y)\Vert_2$ depends only on $\vert x\cdot y\vert$. Moreover, if $x_0\cdot x_1=0$, the triangle inequality implies there is a set of positive measure $U_{x_i}$ such that $\Vert \mathcal{M}(y)-\mathcal{M}(x_i)\Vert=+\infty$ for all $y\in U_{x_i}$ and for some $i=0$ or $1$. The operator $T$ commutes with the action of the isometry group $G$, so the distance $\Vert \mathcal{M}(x)- \mathcal{M}(y)\Vert_2$ is invariant under the action by $G$.
Since $G$ acts transitively on $\mathbb{RP}^n$, it follows that for every $x\in\mathbb{RP}^n$ with odd $n>1$, there exists a positive measure set $U_x$ such that~\eqref{eq-MDSdistinf1} holds.

Finally, by Fubini theorem one has then that the series on the left-hand side of the distance reconstruction formula~\eqref{eq_aeinj11} 
does not converge on a set of couples $(x,y)$ of positive $\mu\otimes\mu$ measure, and hence~\eqref{eq_aeinj11} 
does not hold $\mu\otimes\mu$ almost everywhere, which concludes the proof. 
\end{proof}

\bibliographystyle{plain}
\bibliography{MDSsymm-biblio0120final}
\end{document}